\newtheorem{thm}{Theorem}[section]
\newtheorem{lemma}[thm]{Lemma}
\newtheorem{prop}[thm]{Proposition}
\newtheorem{cor}[thm]{Corollary}
\theoremstyle{definition}
\newtheorem{defn}[thm]{Definition}
\theoremstyle{remark}
\let\c@equation\c@thm
\numberwithin{equation}{section}
\title[Hypersurfaces of Prescribed Curvature and Boundary ]{Strictly Locally Convex Radial Graphs of Prescribed Curvature and Boundary in Space Forms}
\author{Zhenan Sui}
\begin{document}

\begin{abstract}
We obtain $C^2$ a priori estimates for solutions of the nonlinear second-order elliptic equation related to the geometric problem of finding a strictly locally convex hypersurface with prescribed curvature and boundary in a space form. Under the assumption of a strictly locally convex subsolution, we establish existence results by using degree theory arguments.
\end{abstract}

\maketitle


\section {\large Introduction}

In this paper, we stay in $(n+1)$ dimensional space form $N^{n+1}(K)$ ($n \geq 2$) with constant sectional curvature $K = 0$, $1$ or $- 1$, which can be modeled as follows. In Euclidean space $\mathbb{R}^{n + 1}$, fix the origin $0$ and let $\mathbb{S}^n$ denote the unit sphere centered at $0$. Choose the spherical coordinates $(z, \rho)$ in $\mathbb{R}^{n + 1}$ with $z \in \mathbb{S}^n$. Define the new metric on $\mathbb{R}^{n+1}$ by
\[ \bar{g} = d \rho^2 + \phi^2(\rho)\, \sigma \]
where $\sigma$ is the standard metric on $\mathbb{S}^n$ induced from $\mathbb{R}^{n + 1}$. Then $(\mathbb{R}^{n + 1}, \bar{g})$ is a model of $N^{n + 1}(K)$ for $K = 0$ if we choose $\phi(\rho) = \rho$ where $\rho \in [\, 0, \infty)$, for $K = 1$ if $\phi(\rho) = \sin(\rho)$ where $\rho \in [\, 0, \pi/2)$, and for $K = - 1$ if $\phi(\rho) = \sinh(\rho)$ where $\rho \in [\, 0, \infty)$, which correspond to the Euclidean space $\mathbb{R}^{n + 1}$, the upper hemisphere $\mathbb{S}_{+}^{n + 1}$ and the hyperbolic space $\mathbb{H}^{n + 1}$ respectively. Let $V = \phi(\rho) \,\frac{\partial}{\partial \rho}$ be the conformal Killing field in $ N^{n+1}(K)$. It is well known that $V$ is the position vector field in Euclidean space.

Given a disjoint collection $\Gamma = \{ \Gamma_1, \ldots, \Gamma_m \}$
of closed smooth embedded $(n - 1)$ dimensional submanifolds, a smooth symmetric function $f$ of $n$ variables and a smooth positive function $\psi$ defined on $N^{n+1}(K)$, it is a fundamental question in differential geometry to seek a strictly locally convex hypersurface $\Sigma$ with the prescribed curvature
\begin{equation}  \label{eq1-0}
f (\kappa [ \Sigma ]) = \psi( V )
\end{equation}
and boundary
\begin{equation} \label{eq1-2}
\partial \Sigma = \Gamma
\end{equation}
where $\kappa [\Sigma] = (\kappa_1, \ldots, \kappa_n)$ denotes the principal curvatures of $\Sigma$ at $V$ with respect to the outward unit normal $\nu
$. We call a hypersurface $\Sigma$ strictly locally convex if all its principal curvatures $\kappa_i > 0$ everywhere in $\Sigma$.

Equation \eqref{eq1-0} arises in various geometric problems. If we do not impose boundary condition \eqref{eq1-2} and consider closed hypersurfaces, there is a vast literature in this direction. When requiring the convexity of the hypersurfaces, the Gauss curvature case was studied by Oliker \cite{Oliker} while the most current breakthrough is due to Guan-Ren-Wang \cite{GRW15}, where the authors studied convex hypersurfaces with prescribed Weingarten curvature in $\mathbb{R}^{n + 1}$ for general $\psi$ depending on both $V$ and $\nu$. For starshaped compact hypersurfaces, we refer the readers to \cite{BLO} for the introductory material, and see Jin-Li \cite{JL05} for Weingarten curvature in hyperbolic space, \cite{BLO, LO02} for Weingarten curvature in elliptic space, Spruck-Xiao \cite{SX15} for scalar curvature in space forms for general $\psi$, Chen-Li-Wang \cite{CLW18} for Weingarten curvature in warped product spaces for general $\psi$.

For the Dirichlet problem, important examples include the classical Plateau problem concerning the mean curvature as well as the corresponding problem for Gauss curvature (see \cite{CNSI, GS93, Guan95, Guan98, GS02}). The Dirichlet problem in the general setting \eqref{eq1-0}--\eqref{eq1-2} was first studied by Caffarelli-Nirenberg-Spruck \cite{CNSV} for vertical graphs over strictly convex domains in $\mathbb{R}^n$ with constant boundary data. Since then, there have been significant progresses, among which, we mention Guan-Spruck \cite{GS04} and Trudinger-Wang \cite{TW02} for general locally convex hypersurfaces in $\mathbb{R}^{n+1}$ which may not be graphs, Su \cite{Su16} for strictly locally convex radial graphs in $\mathbb{R}^{n+1}$ and Cruz \cite{Cruz} for starshaped radial graphs with prescribed Weingarten curvature in $\mathbb{R}^{n+1}$.

As in \cite{GS04}, the curvature function $f$ is assumed to be defined on the open symmetric convex cone $\Gamma_n^+ \equiv \{ \lambda \in \mathbb{R}^n \vert \, \lambda_i > 0, i = 1, \ldots, n \}$ satisfying the fundamental structure conditions
\begin{equation} \label{eq1-5}
f_i (\lambda) \equiv \frac{\partial f(\lambda)}{\partial \lambda_i} > 0 \quad \mbox{in} \quad  \Gamma_n^+,\quad i = 1, \ldots, n
\end{equation}
\begin{equation} \label{eq1-6}
f \,\,\mbox{is} \,\,\mbox{concave}\,\, \mbox{in} \,\, \Gamma_n^+
\end{equation}
\begin{equation} \label{eq1-7}
f > 0 \quad \mbox{in} \quad \Gamma_n^+, \quad f = 0 \quad \mbox{on} \quad \partial\Gamma_n^+
\end{equation}
In addition, $f$ is assumed to satisfy the technical conditions
\begin{equation} \label{eq1-10}
\sum f_i (\lambda) \lambda_i \geq \sigma_0 \quad \mbox{on} \quad \{ \lambda \in \Gamma_n^+ \vert \, \psi_0 \leq f(\lambda) \leq \psi_1 \}
\end{equation}
for any $\psi_1 > \psi_0 > 0$, where $\sigma_0$ is a positive constant depending only on $\psi_0$ and $\psi_1$, and for any $C > 0$ and any compact set $E \subset \Gamma_n^+$ there exists $R = R( E, C ) > 0$ such that
\begin{equation} \label{eq1-11}
f(\lambda_1, \ldots, \lambda_{n - 1}, \lambda_n + R) \geq C \quad \forall \,\,\lambda \in E
\end{equation}
Examples satisfying \eqref{eq1-5}--\eqref{eq1-11} include a large family $f = \sum f_l$ where
\[ f_l = S_n^{\frac{1}{n N_l}}  \prod\limits_{i = 1}^{N_l - 1} \Big( c_i + \sum\limits_{k = 1}^{n - 1} c_{i, k} S_{n, k}^{\frac{1}{n - k}}\Big)^{\frac{1}{N_l}} \]
where $c_i$, $c_{i, k} \geq 0$ are constants, $c_i + \sum_k c_{i, k} > 0$ for each $i$, $S_k$ is the $k$th elementary symmetric function, $S_0 = 1$ and $S_{k, l} = S_k / S_l \,(0 \leq l < k \leq n)$. However, the pure curvature quotient $S_{n, k}^{1 / (n - k)}$ does not satisfy \eqref{eq1-11}.

In this paper, we are interested in strictly locally convex hypersurfaces embedded in $N^{n+1}(K)$ which can be represented as radial graphs over a domain in $\mathbb{S}^n$. Assuming $\Gamma$ to be the boundary of a smooth positive radial graph $\varphi$ in $N^{n + 1}(K)$ defined on a smooth domain $\Omega \subset \mathbb{S}^n$, we thus have $\Gamma = \{ (z, \varphi(z))\, | z \in \partial \Omega \}$ and look for a smooth strictly locally convex radial graph $\Sigma = \{ (z, \rho(z))\, | z \in \Omega \}$ satisfying the Dirichlet problem
\begin{equation} \label{eq1-1}
f(\kappa [ \rho ]) = \psi(z, \rho) \quad \mbox{in} \quad \Omega
\end{equation}
\begin{equation} \label{eq1-3}
\rho = \varphi \quad \mbox{on} \quad \partial \Omega
\end{equation}
where $\kappa[\rho]$ denotes the principal curvatures of the graph of $\rho$ and we use the same $\psi$ for the smooth positive function on the right hand side.
For $C^0$ estimates, we assume that
\begin{equation} \label{eq1-9}
\Omega \,\,\, \mbox{does}\,\,\mbox{not}\,\,\mbox{contain}\,\,\mbox{any}\,\,\mbox{hemisphere}.
\end{equation}

We obtain the following $C^2$ estimates:
\begin{thm} \label{Theorem1-1}
Under assumption \eqref{eq1-5}--\eqref{eq1-11} and \eqref{eq1-9}, suppose $\Gamma$  can span  a $C^2$ positive radial graph $\overline{\rho}$ in $N^{n + 1}(K)$ which is strictly locally convex in a neighborhood of $\Gamma$. Then for any $C^4$ strictly locally convex radial graph $\rho$ satisfying \eqref{eq1-1}-\eqref{eq1-3}  with $\rho \leq \overline{\rho}$ in $\Omega$, we have
\[ \Vert \rho \Vert_{C^2(\overline{\Omega})} \,\leq \, C \]
where $C$ depends only on $\Omega$, $\Vert\psi\Vert_{C^2}$, $\Vert \overline{\rho}\Vert_{C^1(\overline{\Omega})}$,  $\Vert \varphi \Vert_{C^4(\overline{\Omega})}$, $\inf \psi$, $\inf_{\partial\Omega}\overline{\rho}$ and the convexity of $\overline{\rho}$.
\end{thm}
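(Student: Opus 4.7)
The plan is to follow the Caffarelli-Nirenberg-Spruck template adapted to radial graphs in space forms, treating \eqref{eq1-1} as a fully nonlinear second-order elliptic PDE for $\rho$ on $\Omega \subset \mathbb{S}^n$. First I would choose local coordinates on $\Omega$ and express $\kappa[\rho]$ via the second fundamental form in terms of covariant derivatives of $\rho$ with respect to the spherical metric $\sigma$, ideally after a reparametrization like $u = \int^{\rho} \phi(s)^{-1}\,ds$ (so that $u = \log \rho$ when $K = 0$, $u = \log \tan(\rho/2)$ when $K = 1$, and $u = \log \tanh(\rho/2)$ when $K = -1$) to absorb the warping factor and obtain a standard Hessian-type operator. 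The argument then proceeds in the order $C^0 \to C^1 \to$ boundary $C^2 \to$ interior $C^2$. The $C^0$ upper bound is immediate from $\rho \leq \overline{\rho}$. For the lower bound, strict local convexity of $\Sigma$ combined with \eqref{eq1-9} furnishes a lower barrier: since $\Omega$ sits inside an open hemisphere, one can slide a convex radial function (or a small geodesic sphere) beneath $\Sigma$ and invoke the tangent-plane characterization of convexity. For the $C^1$ bound, convexity of the graph forces $|\nabla \rho|$ to attain its maximum on $\partial \Omega$, where it is controlled by $\overline{\rho}$ from above and by a suitable convex lower barrier matching $\varphi$ from below.

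The bulk of the work is the $C^2$ estimate. For the interior estimate I would apply the maximum principle to a test function of the form $W = \log \kappa_{\max} + \Phi(\rho)$ with an auxiliary $\Phi$ adapted to the ambient space form, differentiate the equation twice at an interior maximum, and exploit the concavity \eqref{eq1-6} of $f$ to absorb the third-order commutator terms via the standard Andrews-Gerhardt trick. Ellipticity \eqref{eq1-5} and the bound $\sum f_i \kappa_i \geq \sigma_0$ from \eqref{eq1-10} then control the ambient curvature contributions that arise because we work in $N^{n+1}(K)$ rather than in $\mathbb{R}^{n+1}$. For the boundary $C^2$ estimate, tangential-tangential second derivatives are controlled directly by double differentiation of \eqref{eq1-3} along $\partial \Omega$. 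Mixed tangential-normal derivatives follow from a barrier argument in a boundary half-ball; the barrier is built from the difference $\overline{\rho} - \rho$, and the strict local convexity of $\overline{\rho}$ near $\Gamma$ together with \eqref{eq1-6} yields a positive lower bound on the linearized operator applied to this barrier.

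The principal obstacle is the double-normal estimate $\rho_{\nu\nu}$ on $\partial \Omega$. Following the Caffarelli-Nirenberg-Spruck scheme, one must show that the principal curvatures $\kappa[\Sigma]$ at any boundary point remain in a compact subset of the open cone $\Gamma_n^+$. The decisive step is an indirect argument: if the largest principal curvature were to blow up, the technical condition \eqref{eq1-11} would force $f(\kappa) \to \infty$, contradicting $f(\kappa) = \psi \leq \sup \psi$. Making this quantitative requires combining the tangential second-derivative bound already obtained with an eigenvalue analysis of the second fundamental form restricted to $T \partial \Sigma$, so that smallness of the mixed tangential-normal block keeps the full matrix from degenerating in $\Gamma_n^+$. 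Condition \eqref{eq1-10} re-enters to supply the uniform ellipticity needed to close the iteration. Once $\kappa[\Sigma]$ is trapped in a compact subset of $\Gamma_n^+$ along $\partial \Sigma$, the equation is uniformly elliptic near the boundary and the boundary $C^2$ bound follows; combined with the interior estimate, this yields the $C^2$ bound asserted in the theorem.
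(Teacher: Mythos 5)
Your outline follows the same Caffarelli--Nirenberg--Spruck template as the paper ($C^0 \to C^1 \to$ boundary $C^2 \to$ global curvature bound, with a transformation of the radial variable, a barrier for the mixed derivatives, and an indirect argument via \eqref{eq1-11} for the double-normal derivative), but the decisive step in the double-normal estimate is missing. To invoke \eqref{eq1-11} you need the curvature eigenvalues tangent to $\partial\Sigma$ to lie in a \emph{compact subset of} $\Gamma_n^+$, i.e.\ you need a uniform \emph{positive lower} bound, not just an upper bound, on the tangential part of the second fundamental form. The tangential estimate obtained by differentiating $\rho=\varphi$ along $\partial\Omega$ gives only an upper bound; strict local convexity of $\rho$ gives positivity but with no uniform constant. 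The paper supplies the missing lower bound by Trudinger's argument: it sets $M=\min_{z\in\partial\Omega}\min_{\xi\in T_z(\partial\Omega)}(\nabla'_{\xi\xi}u+u)$ (in the variable $u$ with $\rho=\zeta(u)$, chosen precisely so that convexity reads $\nabla'^2u+uI>0$), locates the minimum point $z_1$, builds an auxiliary function $\Phi=\frac{\nabla'_{11}\varphi+\varphi-M}{\Gamma_{11}^n}-(u-\varphi)_n$ together with the barrier $\Psi$ to bound $u_{nn}(z_1)$, and then uses \eqref{eq1-7} (a full second-derivative bound at $z_1$ forces the principal curvatures there off $\partial\Gamma_n^+$) to conclude $M\geq c_1>0$. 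Only after this does Lemma~1.2 of Caffarelli--Nirenberg--Spruck III trap the eigenvalues and let \eqref{eq1-11} produce the contradiction $f>\sup\psi$. Without this step your ``eigenvalue analysis of the second fundamental form restricted to $T\partial\Sigma$'' has nothing to keep the matrix from degenerating toward $\partial\Gamma_n^+$, and the indirect argument does not close.

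Two smaller points. First, your global estimate uses the test function $\log\kappa_{\max}+\Phi(\rho)$; the paper's test function is $\ln\kappa_{\max}-\ln(\tau-a)+\beta\Phi$, and the support-function term $-\ln(\tau-a)$ is what generates the good term $\frac{a}{\tau-a}\sum f_i\kappa_i^2\geq \frac{a}{\tau-a}f_1\kappa_1^2$ that absorbs the $f_1$-terms left over after the Andrews--Gerhardt inequality; with $\Phi$ alone the argument does not obviously close for general $f$ satisfying only \eqref{eq1-5}--\eqref{eq1-11}. Second, your proposed change of variable $u=\int^\rho\phi^{-1}$ is not the one that linearizes convexity; the paper's choice $\rho=\zeta(u)$ (e.g.\ $\rho=1/u$ in $\mathbb{R}^{n+1}$) is tailored so that $h_{ij}$ is a positive multiple of $\nabla'^2u+u\delta_{ij}$, which is what makes both the mixed-derivative barrier computation (via concavity of $G$ in $\nabla'^2u$) and the Trudinger step above workable.
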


We remark that for $C^2$ estimates, it is necessary in Theorem \ref{Theorem1-1} to assume $\overline{\rho}$ to be strictly locally convex near its boundary. To establish existence results, as in \cite{GS93, Guan95, Guan98, GS02, GS04, Su16}, we further require that $\overline{\rho}$ is a strictly locally convex subsolution. Since there are topological obstructions to the existence of strictly locally convex hypersurfaces spanning a given $\Gamma$ (see \cite{Ro93}), the existence of a subsolution allows the arbitrary geometry of $\Gamma$.
Using Theorem \ref{Theorem1-1}, we can prove the following existence results.

\begin{thm} \label{Theorem1-2}
Under assumption \eqref{eq1-5}--\eqref{eq1-11} and \eqref{eq1-9}, assume in addition that there exists a smooth strictly locally convex radial graph $\overline{\rho}$ satisfying
\begin{equation} \label{eq1-4}
\begin{aligned}
f (\kappa [ \overline{\rho} ])\, \geq & \,\, \psi(z, \overline{\rho}) \quad & \mbox{in} \quad &\Omega \\
\overline{\rho}\, = & \,\, \varphi \quad &\mbox{on} \quad &\partial \Omega
\end{aligned}
\end{equation}
Then there exists a smooth strictly locally convex radial graph $\Sigma = \{ (z, \rho(z))\,\vert\, z \in \Omega \}$ in space form $N^{n+1}(K)$  satisfying the Dirichlet problem \eqref{eq1-1}-\eqref{eq1-3} with $\rho \leq \overline{\rho}$ in $\overline{\Omega}$ and uniformly bounded principal curvatures
\[ 0 < K_0^{-1} \leq \kappa_i \leq K_0  \quad \mbox{on} \quad \Sigma. \]
where $K_0$ is a uniform positive constant depending only on $\Omega$, $\Vert\psi\Vert_{C^2}$, $\Vert \overline{\rho}\Vert_{C^1(\overline{\Omega})}$,  $\Vert \varphi \Vert_{C^4(\overline{\Omega})}$, $\inf \psi$, $\inf_{\partial\Omega}\overline{\rho}$ and the convexity of $\overline{\rho}$.
\end{thm}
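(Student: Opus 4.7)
My plan is to combine the a priori estimate of Theorem \ref{Theorem1-1} with a Leray--Schauder degree argument, following the general pattern of \cite{GS04, Su16}.

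The first move is to promote the $C^2$ estimate to higher regularity on the class of admissible solutions $\rho \leq \overline{\rho}$. Theorem \ref{Theorem1-1} yields $\|\rho\|_{C^2} \leq C$, and in particular an upper bound $\kappa_i \leq K_0$ on the principal curvatures. A matching lower bound $\kappa_i \geq K_0^{-1}$ then comes for free: the set $\{\lambda \in \Gamma_n^+ : |\lambda| \leq K_0,\, f(\lambda) \geq \inf \psi\}$ is a compact subset of $\Gamma_n^+$ by \eqref{eq1-7} and the hypothesis $\inf \psi > 0$. With $\kappa[\rho]$ trapped in a compact subset of $\Gamma_n^+$, the equation is uniformly elliptic; the concavity \eqref{eq1-6} then permits Evans--Krylov for $C^{2,\alpha}$ estimates, and Schauder theory provides $C^{k,\alpha}$ bounds of every order.

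To set up the degree, I would deform the right-hand side by
\[ \psi_t(z,\rho) := t\,\psi(z,\rho) + (1-t)\,f(\kappa[\overline{\rho}])(z), \quad t\in[0,1], \]
and consider the family $f(\kappa[\rho]) = \psi_t(z,\rho)$ in $\Omega$, $\rho = \varphi$ on $\partial\Omega$. At $t=0$ this admits the explicit solution $\rho = \overline{\rho}$; at $t=1$ it is the target problem. Inequality \eqref{eq1-4} gives $f(\kappa[\overline{\rho}]) \geq \psi_t(z,\overline{\rho})$ for every $t$, so $\overline{\rho}$ remains a strictly locally convex subsolution throughout, and the a priori estimates together with the regularity upgrade above are uniform in $t$. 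I would then view solvability as the zero-set of
\[ \mathcal{F}_t(v) := f(\kappa[\overline{\rho}+v]) - \psi_t(z,\overline{\rho}+v) \]
on the open set
\[ \mathcal{U} := \bigl\{ v \in C^{2,\alpha}(\overline{\Omega}) : v|_{\partial\Omega}=0,\; v\leq 0,\; \overline{\rho}+v\;\text{strictly locally convex},\;\|v\|_{C^{2,\alpha}} < M \bigr\} \]
with $M$ exceeding the uniform a priori bound. A standard maximum-principle comparison at an interior maximum of $\rho-\overline{\rho}$, using \eqref{eq1-6} and the subsolution inequality, forces $\rho\leq\overline{\rho}$, so the zero-set of $\mathcal{F}_t$ stays strictly inside $\mathcal{U}$ for every $t$. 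After composition with the inverse of a fixed invertible second-order elliptic operator, $\mathcal{F}_t$ is a compact perturbation of the identity, so the Leray--Schauder degree $\deg(\mathcal{F}_t,\mathcal{U},0)$ is well-defined and homotopy-invariant.

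The proof then concludes by computing the degree at $t=0$. Since $\partial_\rho \psi_0 \equiv 0$, the linearization of $\mathcal{F}_0$ at $v=0$ is uniformly elliptic with non-positive zeroth-order coefficient, hence invertible on $C^{2,\alpha}_0(\overline{\Omega})$ by the maximum principle; this gives $\deg(\mathcal{F}_0,\mathcal{U},0)=\pm 1$, and homotopy invariance then produces a zero of $\mathcal{F}_1$, i.e.\ the desired smooth solution, whose uniform curvature bounds are precisely those from the first step. The hardest piece of this plan is the uniform two-sided curvature bound along the full homotopy: one must verify that the constants in Theorem \ref{Theorem1-1} depend only on data stable under the homotopy ($\inf_t\inf\psi_t>0$, $\sup_t\|\psi_t\|_{C^2}$, the fixed $\overline{\rho}$), and that the lower curvature bound does not degenerate as $t$ varies. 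Once this uniformity is secured, the Leray--Schauder scheme proceeds on standard tracks.
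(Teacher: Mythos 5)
Your overall framework (uniform $C^2$ estimates $\Rightarrow$ compactness in $\Gamma_n^+$ $\Rightarrow$ Evans--Krylov $\Rightarrow$ degree theory) matches the paper's, and your first step is fine. But the degree argument has two genuine gaps, and they are precisely the difficulties this paper (and \cite{GS93, Guan95, Su16}) is organized around. First, the claim that ``since $\partial_\rho\psi_0\equiv 0$, the linearization of $\mathcal{F}_0$ at $v=0$ has non-positive zeroth-order coefficient'' is unjustified: the curvature operator itself contributes a zeroth-order term ($G_u$ in \eqref{eq2-31}, resp.\ $\mathcal{G}_v$) whose sign is not controlled --- Corollary \ref{GsGu} only gives $|G_u|\le C(1+\sum G^{ii})$. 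Freezing the right-hand side does not remove this term, so invertibility of the linearization at $t=0$ does not follow from the maximum principle. This is exactly why the paper introduces the transformations \eqref{eq3-14}, \eqref{eq6-2} and the auxiliary right-hand sides $\epsilon\,\xi(v)$ with $\xi(v)=e^{2v}$ ($K=0$) or $\sinh v$ ($K=-1$), for which Lemma \ref{Lemma6-1} gives $\mathcal{G}_v-\psi\,\xi'(v)<0$; and why, for $K=1$, where no such $\xi$ exists, Section 6 must deform the ambient metric from $\mathbb{R}^{n+1}$ to $\mathbb{S}^{n+1}_+$ via $\phi^t(\rho)=\sin(t\rho)/t$ rather than deform the right-hand side. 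Your homotopy treats all three space forms identically and therefore cannot reach $\mathbb{S}^{n+1}_+$.

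Second, your set $\mathcal{U}$ is not open (the constraint $v\le 0$ is closed in $C^{2,\alpha}$), and more seriously the solution at $t=0$ of your homotopy is $v\equiv 0$, which lies on the boundary of any admissible set built from the constraint $\rho\le\overline{\rho}$; the degree at $t=0$ cannot be computed at a zero sitting on $\partial\mathcal{U}$. To keep all zeros strictly interior one needs the strong maximum principle \emph{and} the Hopf lemma to conclude $\rho<\overline{\rho}$ in $\Omega$ and $\mathbf{n}(\overline{\rho}-\rho)<0$ on $\partial\Omega$ --- mere weak comparison $\rho\le\overline{\rho}$ is not enough. This is the content of Lemma \ref{Lemma6-3}, which also explains why the paper must first dispose of the case where the subsolution is itself a solution and why the starting point of the degree homotopy is the solution of the auxiliary problem \eqref{eq6-8} (produced by a genuine continuity method using the invertible linearization), not the subsolution itself. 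As written, your computation of $\deg(\mathcal{F}_0,\mathcal{U},0)$ does not go through.
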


In Euclidean space $\mathbb{R}^{n+1}$, Theorem \ref{Theorem1-2} was proved in \cite{GS93} for constant Gauss curvature assuming the existence of a strictly locally convex strict subsolution and was extended in \cite{Guan95} for general $\psi$ depending also on the gradient term.
These existence results are established via the theory of Monge-Amp\`ere type equations on $\mathbb{S}^n$. The linearized operators may have nontrivial kernels, which call for extra efforts for the proof of existence since one can not directly use continuity method. In \cite{GS93}, the authors established the existence results for equations with ${\partial\psi} / {\partial u} \leq 0$ by monotone iteration approach. In \cite{Guan95} the author rederived $C^2$ estimates for a wider class of equations which allows the application of degree theory to the proof of existence for general $\psi$ (the proof also need the existence result in \cite{GS93}). In \cite{Guan98}, Guan obtained the existence results for Monge-Amp\`ere equations with general $\psi$ over smooth bounded domains in $\mathbb{R}^{n}$ by assuming the existence of a subsolution (improving the results in \cite{CNSI} where the authors assumed the strict convexity of the domain) and stated that the strict subsolution assumption in \cite{GS93, Guan95} can be weakened to a subsolution. More recently, Su \cite{Su16} proved Theorem \ref{Theorem1-2} in $\mathbb{R}^{n + 1}$ assuming the existence of a strict subsolution, where the author reformulated \eqref{eq1-1} in a form with invertible linearized operator and thus continuity method and degree theory can be directly applied without extra $C^2$ estimates.

The novelty of this paper lies in: first, it provides a unified approach for $C^2$ estimates by transformation (see \eqref{eq3-14}). Second, for proving existence by degree theory, it generalizes Su's idea ( see \cite{Su16} ) to $\mathbb{H}^{n+1}$ and weaken the strict subsolution assumption. Besides, it creates a new continuity process starting from $\mathbb{R}^{n+1}$ to $\mathbb{S}^{n+1}_+$ and hence the existence in $\mathbb{S}^{n+1}_+$ is proved.

This paper is organized as follows: in section 2, we reformulate equation \eqref{eq1-1} in two different ways: one is used for deriving $C^2$ boundary estimates in section 3 and the other is for proving existence in $\mathbb{R}^{n+1}$ and $\mathbb{H}^{n+1}$ in section 5. Section 4 is devoted to global $C^2$ estimates. Section 6 is for existence in $\mathbb{S}^{n+1}_+$.

\vspace{5mm}

\section{Strictly locally convex radial graphs in space forms and reformulations of equation \eqref{eq1-1}}

\vspace{3mm}

Throughout this paper, we are interested in hypersurface $\Sigma \subset N^{n + 1} (K)$ that can be represented as a smooth radial graph over a smooth domain $\Omega \subset \mathbb{S}^n$, i.e.
\[ \Sigma  = \{ (z, \rho(z))\,| z \in \Omega \} \]
We note that the range for $\rho = \rho(z)$ is $ (0, \rho_U^K)$ where
\begin{equation} \label{eq3-15}
\rho_U^K = \left\{ \begin{aligned} & \infty,\quad\quad\mbox{if} \quad K = 0 \quad \mbox{or}\quad - 1 \\
& \frac{\pi}{2},\quad\quad\mbox{if} \quad K = 1
\end{aligned}
\right.
\end{equation}

Following the notations in \cite{SX15}, we introduce the following geometric quantities on $\Sigma$. Let $\nabla'$ denote the covariant derivatives with respect to some local orthonormal frame $e_1, \ldots, e_n$ on $\mathbb{S}^n$ (while $\nabla$ will be reserved for the covariant derivatives with respect to some local orthonormal frame $E_1, \ldots, E_n$ on $\Sigma$). The induced metric, its inverse, unit outer normal, and second fundamental form on $\Sigma$ are given respectively by
\begin{equation} \label{eq3-1}
 g_{ij} = \phi^2\, \delta_{ij} + \rho_i \rho_j
\end{equation}
\begin{equation} \label{eq3-2}
 g^{ij} = \frac{1}{\phi^2} \big( \delta_{ij} - \frac{\rho_i \rho_j}{\phi^2 + |\nabla'\rho|^2} \big)
\end{equation}
\begin{equation} \label{eq3-4}
 \nu = \frac{- \nabla' \rho + \phi^2 \,\frac{\partial}{\partial \rho}}{\sqrt{\phi^4 + \phi^2 |\nabla' \rho|^2}}
\end{equation}
\begin{equation} \label{eq3-5}
 h_{ij} = \frac{\phi}{\sqrt{\phi^2 + |\nabla' \rho|^2}} \,\big( - \nabla'_{ij} \rho + \frac{2 \phi'}{\phi}\,\rho_i \rho_j + \phi \phi' \delta_{ij} \big)
\end{equation}
where $\rho_i = \rho_{e_i} = \nabla'_{e_i} \rho = \nabla'_{i} \rho $, $\rho_{ij} = \nabla'_{e_j} \nabla'_{e_i} \rho =  \nabla'_{e_j e_i} \rho = \nabla'_{ji} \rho = \nabla'_{ij} \rho$, and higher order covariant derivatives are interpreted in this manner. We thus have $\nabla'\rho = \rho_k \, e_k$ (while in Section 4, $\rho_i$ may denote $\nabla_{E_i} \rho$, which is the covariant derivative with respect to $E_1, \ldots, E_n$).

The principal curvatures $\kappa_1, \ldots, \kappa_n$ of the radial graph $\rho$ are the eigenvalues of the real symmetric matrix $ \{ a_{ij} \}$:
\[ a_{ij} = \gamma^{i k} \,h_{k l}\, \gamma^{l j} \]
with $\{ \gamma^{ik} \}$ and its inverse $\{ \gamma_{ik} \}$ given respectively by
\begin{equation} \label{eq3-6}
 \gamma^{ik} = \frac{1}{\phi} ( \delta_{ik} - \frac{\rho_i \,\rho_k}{\sqrt{\phi^2 + |\nabla' \rho|^2} ( \phi + \sqrt{\phi^2 + |\nabla' \rho|^2} )} )
\end{equation}
\begin{equation} \label{eq3-7}
\gamma_{ik} = \phi \,\delta_{ik} + \frac{\rho_i \rho_k}{ \phi + \sqrt{\phi^2 + |\nabla' \rho|^2} }
\end{equation}
Note that $\{ \gamma_{ik} \}$ is the square root of the metric, i.e., $\gamma_{ik} \gamma_{kj} = g_{ij}$.

\begin{defn}
A hypersurface $\Sigma$ is strictly locally convex if its principal curvatures are all positive, i.e.
$\kappa_i > 0$ for $i = 1, \ldots, n$ everywhere on $\Sigma$;
or, equivalently, the symmetric matrix $\{ a_{ij }\}$ (or $\{ h_{ij} \}$) is positive definite everywhere in $\Omega$.

A $C^2$ function $\rho$ is strictly locally convex if the hypersurface $\Sigma$ represented by $\rho$ is strictly locally convex.
\end{defn}
For simplicity, throughout this paper  $a_{ij} > 0 $ (or $\geq 0$ ) means that the symmetric matrix $\{ a_{ij} \}$ is positive definite (or positive semi-definite); and $a_{ij} \geq b_{ij}$ means that the symmetric matrices $\{a_{ij}\}$ and $\{b_{ij}\}$ satisfy $ a_{ij} - b_{ij} \geq 0 $.
Now we will transform $\rho$ into other variables for deriving a priori estimates and proving the existence.

\vspace{5mm}

\subsection{Reformulation for deriving a priori estimates}~

\vspace{3mm}

We do the following transformation
\begin{equation} \label{eq3-14}
\rho = \zeta ( u ) \, = \,\left\{ \begin{aligned} & \frac{1}{u},\quad\quad & \mbox{if} \quad  K = 0  \\
& \mbox{arccot}\, u,\quad\quad & \mbox{if} \quad  K = 1  \\
& \frac{1}{2} \ln \big( \frac{ u + 1 }{u - 1} \big) ,\quad\quad & \mbox{if} \quad  K = - 1
\end{aligned}
\right.
\end{equation}
In view of \eqref{eq3-15}, the range for $u$ is  $(u_L^K, \infty)$ with
\begin{equation} \label{eq3-16}
u_L^K = \left\{ \begin{aligned} & 0,\quad\quad\mbox{if} \quad K = 0 \quad \mbox{or}\quad 1 \\
& 1,\quad\quad\mbox{if} \quad K = -1
\end{aligned}
\right.
\end{equation}
Then the formula \eqref{eq3-1}, \eqref{eq3-2}, \eqref{eq3-6}, \eqref{eq3-7} and \eqref{eq3-5} can be expressed as
\begin{equation} \label{eq3-8}
g_{ij} = \phi^2 \,\delta_{ij} + \zeta'^2(u)\, u_i u_j
\end{equation}

\begin{equation} \label{eq3-9}
g^{ij} = \frac{1}{\phi^2} \Big( \delta_{ij} - \frac{\zeta'^2(u) u_i u_j}{\phi^2 + \zeta'^2(u) |\nabla' u|^2} \Big)
\end{equation}

\begin{equation} \label{eq3-10}
\gamma^{ik} =  \,\frac{1}{\phi}\, \Big( \delta_{ik} - \frac{ \zeta'^2(u) u_i u_k }{\sqrt{\phi^2 + \zeta'^2(u) |\nabla' u|^2 } ( \phi + \sqrt{\phi^2 + \zeta'^2(u) |\nabla' u|^2 } )}\Big)
\end{equation}

\begin{equation} \label{eq3-11}
 \gamma_{ik} =  \phi \,\delta_{ik} + \frac{\zeta'^2(u) u_i u_k}{ \phi + \sqrt{\phi^2 + \zeta'^2(u) |\nabla' u|^2 } }
\end{equation}

\begin{equation} \label{eq3-13}
h_{ij} =  \frac{- \zeta'(u) \phi}{\sqrt{\phi^2 + \zeta'^2 |\nabla' u|^2}} ( \nabla'_{ij} u + u \,\delta_{ij} )
\end{equation}
Hence
\begin{equation} \label{eq3-18}
a_{ij} = \frac{- \zeta'(u) \phi}{\sqrt{\phi^2 + \zeta'^2 |\nabla' u|^2}} \gamma^{ik}\, ( \nabla'_{kl} u + u \,\delta_{kl} )\,\gamma^{lj}
\end{equation}
It is easy to see that  $\Sigma$ (or $u$) is strictly locally convex if and only if
\begin{equation} \label{eq3-17}
 \nabla'_{ij} u + \, u \,\delta_{ij}  > 0 \quad\mbox{in} \quad \Omega
\end{equation}

Under transformation \eqref{eq3-14},
the Dirichlet problem \eqref{eq1-1}-\eqref{eq1-3}
is equivalent to
\begin{equation} \label{eq2-11}
f(\kappa[ u ]) = \, \psi(z, u)  \quad  \mbox{in} \quad \Omega
\end{equation}
\begin{equation} \label{eq2-12}
\quad \,\, u =  \,\varphi  \quad \quad \mbox{on} \quad \partial \Omega
\end{equation}
Here we still use $\psi$ for the function on the right hand side, and $\varphi$ for the boundary value.
Denote $\kappa[u] = (\kappa_1, \ldots, \kappa_n) = \lambda( A [ u ] )$ where $\lambda(A)$ denotes the eigenvalues of $A$ and $A[u] = \{ a_{ij} \}$ with $a_{ij}$ given by \eqref{eq3-18}.
Define the function $F$ by $ F( A ) = f(\lambda( A ))$ and the function $G$ by
\[ G(r, p, u) \,= \, F ( A ( r, p, u ) )\]
where $A( r, p, u )$ is obtained from $A[u]$ with $(\nabla'^2 u, \nabla' u, u)$ replaced by $( r, p, u )$. Therefore
equation \eqref{eq2-11} can be rewritten in the following form
\begin{equation} \label{eq2-13}
G(\nabla'^2 u, \nabla' u, u) =  \,\psi(z, u)  \quad  \mbox{in} \quad \Omega
\end{equation}
Denote
\[ F^{ij} ( A ) = \frac{\partial F}{\partial a_{ij}} ( A ), \quad F^{ij, kl} (A) = \frac{\partial^2 F}{\partial a_{ij} \partial a_{kl}} (A)\]
\[ G^{ij}(r, p, u) = \frac{\partial G}{\partial r_{ij}}(r, p, u), \quad G^i(r, p, u) = \, \frac{\partial G}{\partial p_i}(r, p, u),  \quad G_u(r, p, u) = \, \frac{\partial G}{\partial u}(r, p, u)\]
\[ \psi_u(z, u) = \, \frac{\partial \psi}{\partial u}(z, u) \]
As mentioned in \cite{GS04}, the function $F$ possesses the following properties.
First, the matrix $\{ F^{ij} (A) \}$ is symmetric with eigenvalues $f_1, \ldots, f_n$.
By \eqref{eq1-5}, $F^{ij} (A) > 0$ whenever $\lambda(A) \in \Gamma_n^+$, and by \eqref{eq1-6} we know that $F$ is a concave function of $A$, i.e., the symmetric matrix $F^{ij, kl} (A) \leq 0 $ whenever $\lambda(A) \in \Gamma_n^+$.
The function $G$ satisfies similar structure conditions as $F$. In fact, from \eqref{eq3-18} we have
\begin{equation} \label{eq2-14}
 G^{ij} = \frac{\partial G}{\partial u_{ij}}  = \frac{\partial F}{\partial a_{kl}} \frac{\partial a_{kl}}{\partial u_{ij}} =  \frac{ - \phi \zeta'(u)}{\sqrt{\phi^2 + \zeta'^2(u) |\nabla' u|^2}} F^{kl} \gamma^{ik} \gamma^{jl}
\end{equation}
Thus the symmetric matrix $G^{ij} > 0$ if and only if $F^{ij} > 0$, which in particular implies that equation \eqref{eq2-13} is elliptic for strictly locally convex solutions.
Also by \eqref{eq3-18} we can calculate
\[ \frac{\partial^2 G}{\partial u_{ij} \partial u_{kl}} = \frac{\partial a_{pq}}{\partial u_{ij}} \,\frac{\partial^2 F}{\partial a_{pq} \partial a_{rs}}\,\frac{\partial a_{rs}}{\partial u_{kl}}\]
which implies that $G$ is concave with respect to $\{ u_{ij} \}$ for strictly locally convex $u$.

We next compute $G^s$ and $G_u$, which will be needed in section 3.
\begin{lemma} \label{Lemma2-1}
Denote $w = \sqrt{ \phi^2  + \zeta'^2(u) |\nabla' u|^2}$. Then
\begin{equation} \label{eq2-15}
 G^s \, = \, - \frac{2 \zeta'^2 ( w \,\gamma^{is}\, u_q + \phi \,\gamma^{q s} u_i )}{ w ( \phi + w )} F^{ij} a_{q j} - \frac{\zeta'^2\, u_s}{w^2} \, F^{ij} a_{ij}
 \end{equation}
\begin{equation} \label{eq2-31}
\begin{aligned}
G_u = -2 \Big( \phi \phi' \zeta' g^{iq} + \frac{\zeta' \zeta'' u_i u_q}{w^2}\Big) F^{ij} a_{q j} + \Big( \frac{\phi' \zeta'}{\phi} - \frac{\phi \phi' \zeta'}{w^2} + \frac{\phi^2 \zeta''}{\zeta'\, w^2} \Big) F^{ij} a_{ij} - \frac{\phi \zeta'}{w} F^{ij} g^{ij}
\end{aligned}
\end{equation}
\end{lemma}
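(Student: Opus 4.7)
Both formulas come from direct computation: since $G(r,p,u) = F(A(r,p,u))$ with $A=\{a_{ij}\}$ given explicitly by \eqref{eq3-18}, the chain rule yields $G^s = F^{ij}\partial a_{ij}/\partial u_s$ and $G_u = F^{ij}\partial a_{ij}/\partial u$. My strategy is to differentiate the factors of \eqref{eq3-18} individually, contract with $F^{ij}$, and consolidate using a small set of algebraic identities: (i) $\zeta'^2|\nabla'u|^2 = w^2-\phi^2$, from the definition of $w$; (ii) $\gamma^{il}u_l = u_i/w$, easily verified from (i) and \eqref{eq3-10} via $1-(w^2-\phi^2)/(w(\phi+w)) = \phi/w$; (iii) $\gamma^{ik}\gamma^{kj} = g^{ij}$, from the fact that $\gamma_{\cdot\cdot}$ is the square root of $g_{\cdot\cdot}$; and (iv) the rearrangement $\gamma^{ik}b_{kl}\gamma^{lj} = -(w/(\zeta'\phi))\,a_{ij}$ (together with its one-sided version $b_{kl}\gamma^{lj} = -(w/(\zeta'\phi))\,\gamma_{km}a_{mj}$), where $b_{kl} := u_{kl}+u\,\delta_{kl}$.

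For $G^s$, the $u_s$-dependence of $a_{ij}$ enters only through the coefficient $-\zeta'\phi/w$ (via $\partial w/\partial u_s = \zeta'^2 u_s/w$) and through each of the two $\gamma^{\cdot\cdot}$ factors (via $w$ in the denominator and the explicit $u_i u_k$ term in \eqref{eq3-10}); the Hessian $b_{kl}$ carries no dependence on $u_s$. The coefficient contribution immediately yields $-(\zeta'^2 u_s/w^2)F^{ij}a_{ij}$ after applying (iv), giving the second term of \eqref{eq2-15}. Symmetry of $F^{ij}$ merges the two $\gamma^{\cdot\cdot}$ contributions into twice one of them; after expanding $\partial\gamma^{ik}/\partial u_s$ (which produces $\delta_{is}u_k$, $\delta_{ks}u_i$, and $(\partial\beta/\partial u_s)u_iu_k$ pieces, with $\beta := \zeta'^2/(w(\phi+w))$) and using (ii)--(iv) to rewrite $u_k b_{kl}\gamma^{lj}$ and $b_{sl}\gamma^{lj}$, the result reorganizes into the combination $w\gamma^{is}u_q + \phi\gamma^{qs}u_i$ against $F^{ij}a_{qj}$ giving the first term of \eqref{eq2-15}.

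For $G_u$, there are additional dependencies to track: $\phi$ depends on $u$ via $\phi(\zeta(u))$ with $d\phi/du = \phi'\zeta'$, $\zeta'$ differentiates to $\zeta''$, and the $u\,\delta_{kl}$ inside $b_{kl}$ directly contributes $\delta_{kl}$. The last of these, combined via (iii), produces exactly $-(\phi\zeta'/w)F^{ij}g^{ij}$. Logarithmic differentiation of $-\zeta'\phi/w$ with respect to $u$, followed by using (i) to rewrite $|\nabla'u|^2$ as $(w^2-\phi^2)/\zeta'^2$, consolidates the coefficient contributions into the $F^{ij}a_{ij}$ parenthetical of \eqref{eq2-31}; here the term $\phi^2\zeta''/(\zeta'w^2)$ arises from combining the direct $\zeta''/\zeta'$ piece with $-\zeta'\zeta''|\nabla'u|^2/w^2$ via $1-(w^2-\phi^2)/w^2 = \phi^2/w^2$. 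Differentiating the $\gamma^{\cdot\cdot}$ factors with respect to $u$ (structurally parallel to the $G^s$ case but with $\partial/\partial u$ in place of $\partial/\partial u_s$) and simplifying through (ii) and (iii) yields the first parenthetical $(\phi\phi'\zeta'\,g^{iq} + \zeta'\zeta''u_iu_q/w^2)F^{ij}a_{qj}$.

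The main obstacle is bookkeeping rather than conceptual: many intermediate expressions with powers of $w$, $\phi+w$, $|\nabla'u|^2$, and products $u_iu_j$ must cancel or combine precisely under (i)--(iv). The $G_u$ calculation is the more delicate one, since $\partial_u w$ mixes the $\phi'$ and $\zeta''$ contributions and producing the final $\phi^2\zeta''/(\zeta'w^2)$ coefficient depends on the exact cancellation identified above. I will guard against algebraic slips by comparing partial sums term-by-term against the target expressions \eqref{eq2-15} and \eqref{eq2-31}.
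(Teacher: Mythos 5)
Your proposal is correct and follows essentially the same route as the paper: chain-rule differentiation of the explicit formula for $a_{ij}$, splitting the contributions among the scalar prefactor $-\zeta'\phi/w$, the two $\gamma^{\cdot\cdot}$ factors (merged into twice one by symmetry), and for $G_u$ the extra $u\,\delta_{kl}$ term, then consolidating with the identities $\zeta'^2|\nabla'u|^2=w^2-\phi^2$, $\gamma^{il}u_l=u_i/w$ and $\gamma^{ik}\gamma^{kj}=g^{ij}$. The only cosmetic difference is that you differentiate $\gamma^{ik}$ directly from its explicit formula, whereas the paper differentiates the simpler $\gamma_{pq}$ and uses $\partial\gamma^{ik}=-\gamma^{ip}(\partial\gamma_{pq})\gamma^{qk}$; both lead to the same terms.
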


\begin{proof}
We first prove \eqref{eq2-15}. Note that
\begin{equation}  \label{eq2-16}
G^s \, = \frac{\partial F}{\partial a_{ij}} \frac{\partial a_{ij}}{\partial u_s} = F^{ij} \big( 2\, \frac{\partial \gamma^{ik}}{\partial u_s} \, h_{kl}\,\gamma^{lj} + \gamma^{ik}\,\frac{\partial h_{kl}}{\partial u_s}\,\gamma^{lj} \big)
\end{equation}
where
\begin{equation} \label{eq2-17}
\frac{ \partial \gamma^{ik}}{\partial u_s} = - \gamma^{ip}\, \frac{\partial \gamma_{pq}}{\partial u_s} \, \gamma^{qk}
\end{equation}
Direct calculations from \eqref{eq3-11} and \eqref{eq3-10} yield
\begin{equation} \label{eq2-18}
\frac{\partial \gamma_{pq}}{\partial u_s} \,=
   \frac{ \zeta'^2(u) ( \delta_{ps} u_q + \delta_{q s} u_p ) }{ \phi + w } - \frac{\zeta'^4( u )\, u_p u_q u_s}{( \phi + w )^2 w}
= \frac{\zeta'^2(u) ( \delta_{ps} u_q + \phi\, u_p \gamma^{q s}) }{ \phi + w  }
\end{equation}
and
\begin{equation} \label{eq2-19}
\gamma^{ip} \,u_p =  \frac{ u_i }{w}
\end{equation}
Besides, from \eqref{eq3-13} and \eqref{eq3-18} we have
\begin{equation} \label{eq2-20}
\gamma^{ik}\,\frac{\partial h_{kl}}{\partial u_s}\,\gamma^{lj} = - \frac{\zeta'^2(u) \,u_s}{ w^2 } \, a_{ij}
\end{equation}
Taking \eqref{eq2-17}--\eqref{eq2-20} into \eqref{eq2-16}, the formula \eqref{eq2-15} is proved.

The formula \eqref{eq2-31} can be proved similarly. In fact,
\begin{equation}  \label{eq2-36}
G_u \, = \frac{\partial F}{\partial a_{ij}} \frac{\partial a_{ij}}{\partial u} = F^{ij} \big( 2\, \frac{\partial \gamma^{ik}}{\partial u} \, h_{kl}\,\gamma^{lj} + \gamma^{ik}\,\frac{\partial h_{kl}}{\partial u}\,\gamma^{lj} \big)
\end{equation}
where
\begin{equation*}
\frac{ \partial \gamma^{ik}}{\partial u} = - \gamma^{ip}\, \frac{\partial \gamma_{pq}}{\partial u} \, \gamma^{qk}
\end{equation*}
From \eqref{eq3-11} we have
\begin{equation*}
\begin{aligned}
 \frac{\partial \gamma_{ik}}{\partial u} = & \, \phi' \zeta' \delta_{ik} + \frac{2 \zeta' \zeta'' u_i u_k}{\phi + w } - \frac{\zeta'^2(u) u_i u_k}{( \phi + w )^2} \big( \phi' \zeta'(u) + \frac{\phi \phi' \zeta' + \zeta'\zeta''|\nabla' u|^2}{w} \big)
 \\
= & \,\phi' \zeta' \delta_{ik} + \frac{ \zeta' u_i u_k}{\phi + w} \big( 2 \zeta'' - \frac{\zeta'}{\phi + w} \big( \phi' \zeta'  +  \frac{\phi \phi' \zeta' + \zeta'\zeta''|\nabla' u|^2}{w}   \big)\, \big) \\
= & \,\phi' \zeta' \delta_{ik} + \frac{ \zeta' u_i u_k}{\phi + w} \big( 2 \zeta'' - \frac{\zeta'^2 \zeta'' |\nabla' u|^2}{(\phi + w) w} - \frac{\phi' \zeta'^2}{w} \big) \\
= & \,\phi' \zeta' \delta_{ik} + \frac{ \zeta' u_i u_k}{\phi + w} \big(  \frac{w + \phi}{w} \zeta''  - \frac{\phi' \zeta'^2}{w} \big)
\end{aligned}
\end{equation*}
In view of \eqref{eq3-10}, the above formula becomes
\begin{equation} \label{eq2-37}
 \frac{\partial \gamma_{ik}}{\partial u} = \phi \phi' \zeta' \gamma^{ik} + \frac{\zeta' \zeta'' u_i u_k}{w}
\end{equation}
Direct calculation from \eqref{eq3-13} yields
\begin{equation} \label{eq2-38}
\frac{\partial h_{ij}}{\partial u} = ( - \frac{\phi' \zeta'^2}{w} + \frac{\phi^2 \phi' \zeta'^2}{w^3} - \frac{\phi^3 \zeta''}{w^3} ) (\nabla'_{ij} u + u \delta_{ij}) - \frac{\phi\, \zeta'}{w} \delta_{ij}
\end{equation}
Inserting \eqref{eq2-37} and \eqref{eq2-38} into \eqref{eq2-36} and in view of \eqref{eq3-18} and \eqref{eq2-19} we obtain
\eqref{eq2-31}.
\end{proof}
\begin{cor} \label{GsGu}
Suppose that we have the $C^1$ bounds for strictly locally convex solutions $u$ of \eqref{eq2-11}:
\begin{equation*}
 u_L^K < C_0^{-1} \leq u \leq C_0, \quad\quad |\nabla' u| \leq C_1 \quad\mbox{in} \quad \overline{\Omega}
\end{equation*}
Then
\[| G^s | \, \leq \, C \quad \mbox{and} \quad
| G_u | \,\leq \, C ( 1 + \sum G^{ii} )
\]
\end{cor}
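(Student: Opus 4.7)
The plan is to read off the estimates directly from formulas \eqref{eq2-15} and \eqref{eq2-31} of Lemma~\ref{Lemma2-1}, bounding each summand separately. First, the assumed $C^0$ and $C^1$ bounds, together with the strict lower bound $u > u_L^K$, confine $\rho = \zeta(u)$ to a compact subinterval of $(0,\rho_U^K)$, so $\phi,\phi',\zeta'(u),\zeta''(u)$ are uniformly bounded and $w = \sqrt{\phi^2 + \zeta'^2|\nabla' u|^2}$ satisfies $0 < c \leq \phi \leq w \leq C$. The components of $g^{ij}$, $\gamma^{ij}$, and of $\nabla' u$ are likewise uniformly bounded. Moreover, since $u$ solves \eqref{eq2-11}, the structural identity
\[ F^{ij} a_{ij} \,=\, \sum_i f_i(\lambda)\lambda_i \,=\, f(\lambda(A[u])) \,=\, \psi(z,u) \]
is controlled by $\sup\psi$.

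The main technical step is a pointwise bound on contractions of the form $F^{ij}a_{qj}v_{iq}$, where $v_{iq}$ is any tensor with uniformly bounded entries. At each point I would diagonalize $A = \{a_{ij}\}$ in an orthonormal frame; there $F^{ij} = f_i\delta_{ij}$ and $a_{qj} = \lambda_q\delta_{qj}$, hence
\[ F^{ij}a_{qj}v_{iq} \,=\, \sum_i f_i \lambda_i\, v_{ii} \,\leq\, \Big(\max_i|v_{ii}|\Big)\sum_i f_i\lambda_i \,=\, \Big(\max_i|v_{ii}|\Big)\psi, \]
where non-negativity comes from $f_i,\lambda_i>0$. Since the left-hand side is a scalar, the bound is frame-independent. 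Applied to the first term of \eqref{eq2-15} with $v_{iq} = w\gamma^{is}u_q + \phi\gamma^{qs}u_i$ (whose entries are bounded for each fixed $s$), together with the observation that the second term of \eqref{eq2-15} is simply $-\zeta'^2 u_s\,\psi/w^2$, this yields $|G^s|\leq C$.

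For $|G_u|$, the first and middle terms of \eqref{eq2-31} are treated in the same way: the diagonalization bound applied to the bounded matrix $\phi\phi'\zeta' g^{iq} + \zeta'\zeta'' u_i u_q/w^2$ controls the first, while the middle is a bounded multiple of $F^{ij}a_{ij}=\psi$. The decisive observation is that the third term $-\frac{\phi\zeta'}{w}F^{ij}g^{ij}$ equals exactly $\sum_i G^{ii}$. To see this, set $j=i$ in \eqref{eq2-14} and sum over $i$:
\[ \sum_i G^{ii} \,=\, \frac{-\phi\zeta'(u)}{w}\,F^{kl}\sum_i\gamma^{ik}\gamma^{il} \,=\, \frac{-\phi\zeta'(u)}{w}\,F^{kl}g^{kl}, \]
using the symmetry of $\gamma^{ij}$ together with $\gamma_{ik}\gamma_{kj}=g_{ij}$, so that the inverse matrices satisfy $\gamma^{ik}\gamma^{kl} = g^{kl}$ and hence $\gamma^{ik}\gamma^{il}=g^{kl}$. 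Assembling the three estimates gives $|G_u|\leq C + \sum_i G^{ii}\leq C(1+\sum G^{ii})$.

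I do not foresee any serious obstacle: the only conceptual ingredient is the simultaneous-diagonalization trick for bounding $F^{ij}a_{qj}v_{iq}$; everything else is bookkeeping with uniformly bounded factors.
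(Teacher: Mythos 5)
Your overall strategy is exactly the paper's: read the bounds off \eqref{eq2-15} and \eqref{eq2-31}, control contractions of the form $b_{ik}F^{ij}a_{kj}$ for bounded $\{b_{ik}\}$ by simultaneous diagonalization of $\{F^{ij}\}$ and $A$, and identify the last term of \eqref{eq2-31} with $\sum G^{ii}$ via \eqref{eq2-14} and $\gamma^{ik}\gamma^{kl}=g^{il}$. All of that is correct, and your observation that the third term of \eqref{eq2-31} equals $\sum_i G^{ii}$ \emph{exactly} is a slightly cleaner bookkeeping than the paper's $\sum f_i \leq C\sum G^{ii}$.

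There is, however, one genuinely wrong step: the claimed ``structural identity'' $F^{ij}a_{ij}=\sum_i f_i\lambda_i = f(\lambda)=\psi$. The equality $\sum_i f_i(\lambda)\lambda_i = f(\lambda)$ is Euler's identity and requires $f$ to be homogeneous of degree one, which is \emph{not} among the hypotheses \eqref{eq1-5}--\eqref{eq1-11}; indeed the paper's model examples $f=\sum f_l$ with $c_i>0$ are not $1$-homogeneous. What is true, and what the paper uses, is the \emph{inequality}
\[
\sum_i f_i(\lambda)\lambda_i \;\leq\; f(\lambda) - f(0) \;=\; f(\lambda) \;=\;\psi \;\leq\; C,
\]
which follows from the concavity \eqref{eq1-6} applied between $\lambda$ and $0$ together with $f=0$ on $\partial\Gamma_n^+$ from \eqref{eq1-7}. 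Since every place you invoke the ``identity'' (the second term of \eqref{eq2-15}, the middle term of \eqref{eq2-31}, and the factor $\sum_i f_i\lambda_i$ in your diagonalization bound) only needs an upper bound on the nonnegative quantity $F^{ij}a_{ij}$, your proof survives verbatim once the false equality is replaced by this concavity inequality; but as written the justification is incorrect and would not extend to the class of $f$ the theorem covers.
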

\begin{proof}
Note that $\{ F^{ij} (A) \}$ and $A$ can be diagonalized simultaneously by an orthonormal transformation. Consequently, the eigenvalues of the matrix $\{ F^{ij}(A)\} A$, which is not necessarily symmetric, are given by
\[ \lambda( \{ F^{ij}(A)\} A ) = ( f_1 \kappa_1, \ldots, f_n \kappa_n ) \]
In particular we have
\[ F^{ij}\, a_{ij} = \sum f_i \kappa_i \]
In addition,
for a bounded matrix $B = \{b_{ij}\}$, i.e. $|b_{ij}| \leq C$ for all $1 \leq i, j \leq n$ we have
\[ \vert b_{i k} F^{i j } a_{k j} \vert \, \leq C \sum f_i \kappa_i \]
Thus from \eqref{eq2-15} and \eqref{eq2-31} we have
\[ \vert G^s \vert \,\leq \, C  \sum f_i \kappa_i\quad \mbox{and} \quad \vert G_u \vert \,\leq \,C ( \sum f_i \kappa_i + \sum f_i ) \]
Finally, by the concavity of $f$ and $f(0) = 0$ we can derive that $ \sum f_i \kappa_i \leq \psi \leq C$. Also, in view of \eqref{eq2-14} we have
$\sum f_i \leq C \sum G^{ii}$. Hence the corollary is proved.
\end{proof}

\vspace{5mm}

\subsection{ Reformulation for proving existence }~

\vspace{3mm}

For the proof of the existence in Section 5, we do the following transformation.
\begin{equation} \label{eq6-2}
u = \eta ( v ) \, = \,\left\{ \begin{aligned} & e^v,\quad\quad & \mbox{if} \quad  K = 0  \\
& \sinh v,\quad\quad & \mbox{if} \quad  K = 1  \\
& \cosh v,\quad\quad & \mbox{if} \quad  K = - 1
\end{aligned}
\right.
\end{equation}
In view of \eqref{eq3-16}, the range for $v$ is $(v_L^K, \infty)$ with
\begin{equation} \label{eq6-19}
v_L^K = \left\{ \begin{aligned} & - \infty,\quad\quad &\mbox{if} \quad K = 0  \\
& 0,\quad\quad &\mbox{if} \quad K = 1 \,\, \mbox{or}\,\, -1
\end{aligned}
\right.
\end{equation}
The formula \eqref{eq3-10} and \eqref{eq3-13} can consequently be transformed into
\begin{equation} \label{eq6-3}
\gamma^{ik} =  \,\eta'(v) \,\Big( \delta_{ik} - \frac{ v_i v_k }{\sqrt{1 +  |\nabla' v|^2 } ( 1 + \sqrt{ 1 +  |\nabla' v|^2 } )} \Big)
\end{equation}
\begin{equation} \label{eq6-4}
h_{ij} = \, \frac{1}{ \eta'^2(v) \sqrt{1 + |\nabla' v|^2}} \Big( \eta'(v) \nabla'_{ij} v + \eta(v) v_i v_j +  \eta(v) \,\delta_{ij} \Big)
\end{equation}
Denoting
\[ w = \sqrt{1 + |\nabla' v|^2}\quad \mbox{and} \quad \tilde{\gamma}^{ik} = \delta_{ik} - \frac{ v_i v_k }{w ( 1 + w )}\]
we therefore have
\begin{equation} \label{eq6-1}
\begin{aligned}
a_{ij} =  & \, \frac{1}{ w } \, \tilde{\gamma}^{ik} \Big( \eta'(v) \nabla'_{kl} v + \eta(v) v_k v_l +  \eta(v) \,\delta_{kl}\Big)\, \tilde{\gamma}^{lj} \\
= & \, \frac{1}{w} \Big(\, \eta(v)\, \delta_{ij} \,+ \,\eta'(v)\, \tilde{\gamma}^{ik}\, \nabla'_{kl} v  \,\tilde{\gamma}^{lj} \,\Big)
\end{aligned}
\end{equation}
It is easy to see that $\Sigma$ (or $v$) is strictly locally convex if and only if
\begin{equation} \label{eq6-6}
 \eta(v)\, \delta_{ij} \,+ \,\eta'(v)\, \tilde{\gamma}^{ik} \,\nabla'_{kl} v \, \tilde{\gamma}^{lj}  > 0 \quad \mbox{in} \quad \Omega
\end{equation}

Under transformation \eqref{eq6-2}, we now reformulate \eqref{eq2-11}--\eqref{eq2-12} in terms of $v$,
\begin{equation} \label{eq2-24}
f(\kappa[ v ]) = \, \psi(z, v)  \quad  \mbox{in} \quad \Omega
\end{equation}
\begin{equation} \label{eq2-25}
v =  \,\varphi  \quad \quad \mbox{on} \quad \partial \Omega
\end{equation}
Here we still use $\psi$ for the right function and $\varphi$ for the boundary value. At this time,
$\kappa[v] = (\kappa_1, \ldots, \kappa_n) = \lambda( A [ v ] )$ and $A[v] = \{ a_{ij} \}$ with $a_{ij}$ given by \eqref{eq6-1}. Replacing $(\nabla'^2 v, \nabla' v, v)$ in $A[v]$ by $( r, p, v )$ we can define $A( r, p, v )$. Then we can
define $\mathcal{G}$ by
$\mathcal{G}(r, p, v) \,= \, F ( A ( r, p, v ) )$.
Thus, equation \eqref{eq2-24} can be rewritten as
\begin{equation} \label{eq2-26}
\mathcal{G}(\nabla'^2 v, \nabla' v, v) =  \,\psi(z, v)  \quad  \mbox{in} \quad \Omega
\end{equation}
Denote
\[ \mathcal{G}^{ij}(r, p, v) = \frac{\partial \mathcal{G}}{\partial r_{ij}} (r, p, v), \quad \mathcal{G}^{i} (r, p, v) = \frac{\partial \mathcal{G}}{\partial p_{i}}(r, p, v), \quad \mathcal{G}_v (r, p, v) = \, \frac{\partial \mathcal{G}}{\partial v} (r, p, v) \]
By \eqref{eq6-1}, we notice that equation \eqref{eq2-26} is elliptic for strictly locally convex $v$, and $\mathcal{G}$ is concave with respect to $\nabla'^2 v $ for strictly locally convex $v$.

\vspace{5mm}

\section{ Second order boundary estimates}

\vspace{5mm}

In this section, we derive the $C^2$ a priori estimates for strictly locally convex solutions $u$ to the Dirichlet problem \eqref{eq2-13}-\eqref{eq2-12} with $u \geq \underline{u}$ in $\Omega$
\begin{equation} \label{eq4-2}
\Vert u \Vert_{C^{2}(\overline{\Omega})} \,\leq\, C
\end{equation}
which, together with $u \geq C_0^{-1} > u_L^K$, implies an upper bound for all the principal curvatures by \eqref{eq3-18}. By assumption \eqref{eq1-7}, the principal curvatures admit a uniform positive lower bound, i.e.,
\begin{equation} \label{eq4-3}
0 < K_0^{-1} \leq \kappa_i \leq K_0 \quad \mbox{in} \quad \Omega
\end{equation}
which implies the uniform ellipticity of the linearized operator. Then $C^{2, \alpha}$ estimates can be established by Evans-Krylov theory \cite{Evans,Krylov}
\begin{equation} \label{eq4-1}
\Vert u \Vert_{C^{2,\alpha}(\overline{\Omega})} \,\leq\, C
\end{equation}
and higher-order regularity follows from classical Schauder theory.

The following $C^1$ estimates have been established in \cite{GS93} which was originally stated for $\mathbb{R}^{n + 1}$, but it also works in space forms.
\begin{lemma} \label{Lemma5-1}
Under assumption \eqref{eq1-9}, for any strictly locally convex function $u$ with $ u \geq \underline{u}$ in $\Omega$ and $u = \underline{u}$ on $\partial\Omega$ we have
\begin{equation} \label{eq5-1}
 u_L^K < C_0^{-1} \leq u \leq C_0, \quad\quad |\nabla' u| \leq C_1 \quad\mbox{in} \quad \overline{\Omega}
\end{equation}
where $C_0$ depends only on $\Omega$, $\sup_{\partial\Omega}\underline{u}$ and $\inf_{\Omega}\underline{u}$; $C_1$ depends in addition on $\sup_{\partial\Omega}\vert\nabla'\underline{u}\vert$.
\end{lemma}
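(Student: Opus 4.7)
The plan is to prove the three bounds in three stages: the lower bound on $u$ from the subsolution inequality, the upper bound on $u$ via an ODE comparison along spherical geodesics combined with \eqref{eq1-9}, and the gradient bound via interior convexity together with boundary barriers.

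The lower bound is immediate: since $\underline{u}$ represents a valid radial graph in $N^{n+1}(K)$, we have $\underline{u} > u_L^K$ pointwise on $\overline{\Omega}$, and by compactness $\inf_{\overline{\Omega}}\underline{u} > u_L^K$; combined with $u \geq \underline{u}$, this yields the constant $C_0^{-1}$. For the upper bound, restrict $u$ to any unit-speed spherical geodesic $\gamma:[s_0,s_1]\to\mathbb{S}^n$ lying in $\overline{\Omega}$ with endpoints in $\partial\Omega$; then \eqref{eq3-17} restricted to $\gamma$ gives $U''+U>0$ for $U(s)=u(\gamma(s))$. Provided $s_1-s_0<\pi$, set $\phi(s)=\cos\bigl(s-(s_0+s_1)/2\bigr)$, which is positive on $[s_0,s_1]$ and solves $\phi''+\phi=0$; a direct calculation gives $\phi V''+2\phi'V' = U''+U > 0$ for $V=U/\phi$, and since this linear operator on $V$ has no zero-order term, its maximum principle forces $V$ to attain its maximum at $\{s_0,s_1\}$. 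Hence
\[
U(s) \,\leq\, \frac{\max\{U(s_0),U(s_1)\}}{\cos\bigl((s_1-s_0)/2\bigr)} \,\leq\, \frac{\sup_{\partial\Omega}\underline{u}}{\cos\bigl((s_1-s_0)/2\bigr)}.
\]
The hypothesis \eqref{eq1-9} is then used to produce through every $p\in\Omega$ such a crossing arc with $s_1-s_0 \leq L_0 < \pi$ for a uniform $L_0$ depending only on $\Omega$, yielding the upper bound $C_0$.

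For the gradient bound, interior estimates come from the convex-function character of the $1$-homogeneous extension $\tilde{u}(x)=|x|\,u(x/|x|)$ (in the case $K=0$), which is strictly convex on the cone over $\Omega$ in $\mathbb{R}^{n+1}$; convex functions are locally Lipschitz with bounds depending on $\sup u$, $\inf u$, and distance to $\partial\Omega$. For $K=\pm 1$ the analogous construction via \eqref{eq3-14} gives the same structure. For the boundary gradient, $\underline{u}$ serves as a barrier from below: since $u \geq \underline{u}$ with equality on $\partial\Omega$, the inward normal derivative of $u$ at $\partial\Omega$ is bounded from one side by that of $\underline{u}$. The opposite one-sided bound follows from the $C^0$ upper bound together with the ODE $U''+U>0$ along geodesics entering $\Omega$ transverse to $\partial\Omega$, comparing with the sinusoidal solutions matching the endpoint data.

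The main obstacle is the geometric step in the upper bound: extracting from \eqref{eq1-9} the uniform crossing-arc length bound $L_0 < \pi$. When $\overline{\Omega}$ is contained in an open hemisphere, one can use $\phi(z) = \langle z, e\rangle$ as a global positive test function in place of the local $\phi(s)$ above, and the argument is immediate. In the general setting (e.g., an equatorial strip in $\mathbb{S}^n$), one must patch hemispherical barriers together or argue by compactness, which is the technical heart of the original estimate in \cite{GS93}. The passage from $\mathbb{R}^{n+1}$ to the space forms $N^{n+1}(K)$ with $K=\pm 1$ is otherwise structural, since the transformation \eqref{eq3-14} has been designed precisely so that \eqref{eq3-17} takes the same spherical convexity form independent of $K$; only the requirement $u > u_L^K$ needs separate attention, and this is supplied by the subsolution $\underline{u}$.
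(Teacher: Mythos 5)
The paper does not actually prove this lemma; it simply cites \cite{GS93} and asserts the argument carries over to space forms. So the real comparison is between your sketch and the Guan--Spruck argument. Your overall strategy is the right one and matches theirs: the transformation \eqref{eq3-14} turns strict local convexity into $\nabla'^2u+u\,\delta_{ij}>0$ uniformly in $K$, the lower bound is immediate from $u\ge\underline{u}$, and the $C^0$ and $C^1$ bounds come from this spherical convexity plus \eqref{eq1-9}. However, two steps as written do not close.

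First, the geometric input you extract from \eqref{eq1-9} is both stronger than needed and not clearly true: you require, through every $p\in\Omega$, a great-circle arc \emph{contained in} $\overline{\Omega}$ with both endpoints on $\partial\Omega$ and length uniformly $\le L_0<\pi$. A great circle through $p$ need not meet $\partial\Omega$ at all (the equator in an equatorial band), and when it does, the maximal arc containing $p$ can a priori be long; deducing a uniform two-sided bound from the no-hemisphere hypothesis is not a routine compactness argument. The standard route is one-sided and much softer: no open hemisphere in $\Omega$ plus compactness of $\overline{\Omega}$ gives $\sup_{z\in\Omega}\mathrm{dist}(z,\partial\Omega)\le\pi/2-\delta_0$ with $\delta_0=\delta_0(\Omega)>0$; then one works at the interior maximum point $z_0$ of $u$, where $\nabla'u(z_0)=0$, and the Sturm-type comparison $\big(W'\sin t-W\cos t\big)'=(W''+W)\sin t\ge0$ applied to $W=u-u(z_0)\cos d(\cdot,z_0)$ along the minimizing geodesic from $z_0$ to $\partial\Omega$ yields $u(z_0)\le\sup_{\partial\Omega}\underline{u}/\sin\delta_0$. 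Your two-point comparison function is fine once an arc exists, but you have not supplied the arc, and you do not need it.

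Second, the gradient bound does not assemble as stated: the interior Lipschitz estimate for the (locally) convex homogeneous extension degenerates like the reciprocal of the distance to the boundary, while your barrier argument controls $\nabla'u$ only \emph{on} $\partial\Omega$; these two do not combine into a bound on $\overline{\Omega}$. The missing observation is that for the degree-one homogeneous extension $\tilde u$, each directional derivative $\partial_e\tilde u$ is nondecreasing along line segments of the cone over $\Omega$ in the direction $e$, so $\sup|\nabla\tilde u|$ is controlled by its values on the lateral boundary of the cone (where your barrier estimates apply) together with $\sup u$ (for segments that stay in the cone and escape to infinity, where $\nabla\tilde u$ is constant along rays and Euler's relation applies). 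With that insertion, and with the one-sided form of the $C^0$ argument above, the proof is complete and is essentially the one in \cite{GS93}.
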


In next section, we will derive global curvature estimates, which is equivalent to the global estimates for $|\nabla'^2 u|$ on $\overline{\Omega}$ from its bound on $\partial\Omega$. Therefore in this section we focus on the boundary estimates:
\begin{equation} \label{eq3-32}
| \nabla'^2 u | \leq C \quad\mbox{on} \quad \partial\Omega
\end{equation}

Consider any fixed point $z_0 \in \partial \Omega$. Choose a local orthonormal frame field $e_1, \ldots, e_n$ around $z_0$ on $\Omega$, which is obtained by parallel translation of a local orthonormal frame field on $\partial\Omega$ and the interior, unit, normal vector field to $\partial \Omega$, along the geodesics perpendicular to $\partial \Omega$ on $\Omega$. Assume that $e_n$ is the parallel translation of the unit normal field on $\partial \Omega$.

Since $u = \varphi$ on $\partial \Omega$,
\[ \nabla'_{\alpha\beta} (u - \varphi) = - \nabla'_n ( u - \varphi )\,\Gamma_{\alpha\beta}^n, \quad \alpha, \beta < n \quad\mbox{on}\quad \partial\Omega \]
where $\Gamma_{ij}^k$ are the Christoffel symbols of $\nabla'$ with respect to the frame $e_1, \ldots, e_n$ on $\mathbb{S}^n$. We thus obtain
\begin{equation} \label{eq3-27}
| \nabla'_{\alpha \beta} u (z_0) | \leq C, \quad \alpha, \beta < n
\end{equation}

Let $\rho(z)$ and $d(z)$ denote the distances from $z \in \overline{\Omega}$ to $z_0$ and $\partial \Omega$ on $\mathbb{S}^n$, respectively.
Set
\[ \Omega_\delta = \{ z \in \Omega : \rho(z) < \delta \}  \]
Choose $\delta_0 > 0$ sufficiently small such that $\rho$ and $d$ are smooth in $\Omega_{\delta_0}$, on which, we have
\[ |\nabla' d| = 1, \quad\quad - C\, I \leq \nabla'^2\, d \leq C \,I, \quad
\quad |\nabla' \rho| = 1, \quad\quad I \,\leq \,\nabla'^2 \,\rho^2 \,\leq \, 3 I \]
where $C$ depends only on $\delta_0$ and the geometric quantities of $\partial \Omega$, and
\[ \nabla'^2 \underline{u} + \underline{u} \, I \geq 4 \, c_0 \, I  \]
for some constant $c_0 > 0$ in view of the strict local convexity  of $\underline{u}$ and \eqref{eq3-17}.

We will need the following barrier function
\[ \Psi = A v + B \rho^2 \]
with
\[ v = u - \underline{u} + \epsilon\, d - \frac{N}{2} \,d^2 \]
and the linearized operator associated with equation \eqref{eq2-13}
\begin{equation} \label{eq2-21}
 L  = G^{ij}\, \nabla'_{i j} + G^i \,\nabla'_i
\end{equation}
to estimate the mixed tangential normal and pure normal second derivatives at $z_0$.
By direct calculation and Corollary \ref{GsGu} we have
\begin{equation} \label{eq3-20}
\begin{aligned}
 L v = \,& ( G^{ij} \nabla'_{ij} + G^i \nabla'_i  ) ( u - \underline{u} + \epsilon \,d - \frac{N}{2} d^2) \\
     = \, & G^{ij} \nabla'_{ij} ( u - \underline{u} - \frac{N}{2} \,d^2 ) + \epsilon \,G^{ij} \nabla'_{ij}  d + G^i  \nabla'_i ( u - \underline{u} + \epsilon\, d - \frac{N}{2} \,d^2 ) \\
     \leq \, & G^{ij} \Big( \nabla'_{ij} u - \,\big( \nabla'_{ij}(\underline{u} + \frac{N}{2} \,d^2 ) - 2 c_0 \delta_{ij} \big) \Big) \\ & - 2 c_0 \sum G^{ii}
       + C \epsilon \sum G^{ii} + C  ( 1 +  \epsilon +  N \delta )
\end{aligned}
\end{equation}
Since $G(\nabla'^2 u, \nabla' u, u)$ is concave  with respect to $\nabla'^2 u$,
\begin{equation} \label{eq3-21}
\begin{aligned}
& G^{ij} \Big( \nabla'_{ij} u -\, \big( \nabla'_{ij} (\underline{u} + \frac{N}{2} \,d^2 ) - 2 c_0 \delta_{ij}\big) \Big) \\ \leq &\,\, G( \nabla'^2 u, \nabla' u, u ) - G\Big( \nabla'^2\big( \underline{u} + \frac{N}{2} \,d^2 \big) - 2 c_0 I, \nabla' u, u \Big)
\end{aligned}
\end{equation}
Note that
\begin{equation*}
\begin{aligned}
&  \nabla'^2\big( \underline{u} + \frac{N}{2} \,d^2 \big ) - 2 c_0 I + u I \\
= \quad &  \nabla'^2 \underline{u} + \underline{u}\, I +  N d \nabla'^2 d + N \nabla' d \otimes \nabla' d - 2 c_0 I + ( u - \underline{u} ) I \\
\geq \quad & 2 c_0 I - C N \delta I + N \nabla' d \otimes \nabla' d := \mathcal{H}
\end{aligned}
\end{equation*}
We thus have
\begin{equation} \label{eq3-22}
\begin{aligned}
& G\Big( \nabla'^2\big( \underline{u} + \frac{N}{2} \,d^2 \big) - 2 c_0 I, \nabla' u, u \Big)  \\
= \,\, & F \Big( \frac{- \phi \,\zeta'(u)}{\sqrt{\phi^2 + \zeta'^2 |\nabla' u|^2 }}\, g^{-1/2} \big( \nabla'^2( \underline{u} + \frac{N}{2} \,d^2 ) - 2 c_0 I + u I \big)\,\, g^{-1/2} \Big) \\
\geq  &\,\, F \Big( \frac{- \phi\, \zeta'(u)}{\sqrt{\phi^2 + \zeta'^2 |\nabla' u|^2 }} \,g^{-1/2} \mathcal{H}\, g^{-1/2} \Big)
\\ =  & \,\,F \Big( \frac{- \phi\, \zeta'(u)}{\sqrt{\phi^2 + \zeta'^2 |\nabla' u|^2 }} \,\mathcal{H}^{1/2} \,g^{-1} \,\mathcal{H}^{1/2} \Big) \\
\geq & \,\,F \Big( \frac{- \phi\, \zeta'(u)}{\sqrt{\phi^2 + \zeta'^2 |\nabla' u|^2 }}\,\mathcal{H}^{1/2} \, \frac{1}{ \phi^2 + \zeta'^2(u) |\nabla' u|^2} \,I \,\mathcal{H}^{1/2} \Big) \\
=  & \,\,F \Big( \frac{- \phi\, \zeta'(u)}{(\phi^2 + \zeta'^2 |\nabla' u|^2 )^{3/2}} \,\mathcal{H} \Big)\,
\geq \, \, F ( \tilde{c}\, \mathcal{H} )
\end{aligned}
\end{equation}
where $\tilde{c}$ is a positive constant depending only on $C_0$ and $C_1$.

Combining \eqref{eq3-20}--\eqref{eq3-22} we have
\begin{equation} \label{eq3-23}
 L v \leq   \, - F ( \tilde{c}\,\mathcal{H} )   + (   C \epsilon  - 2 c_0 ) \,\sum G^{ii} + C ( 1 + \epsilon + N \delta )
\end{equation}
where $\mathcal{H} = \mbox{diag} \Big( 2 c_0 - C N \delta,\,\, \ldots,\,\, 2 c_0 - C N \delta,\,\, 2 c_0 - C N \delta + N\Big)$.
By \eqref{eq1-11}, we can choose $N$ sufficiently large and $\epsilon$, $\delta$ sufficiently small with $\delta$ depending on $N$ such that
\[  C \epsilon  \leq c_0, \quad  C N \delta \leq c_0, \quad  -F ( \tilde{c}\,\mathcal{H} )   + C  + 2 c_0  \leq - 1, \]
Therefore, \eqref{eq3-23} becomes
\begin{equation} \label{eq3-25}
 L v \leq - c_0 \sum G^{ii} - 1
\end{equation}
We then choose $\delta \leq \frac{2 \epsilon}{N}$ such that
\[ v \,\geq \, 0 \quad \mbox{in} \quad \Omega_\delta \]
A direct consequence of \eqref{eq3-25} is
\begin{equation} \label{eq3-29}
L \Psi \, = \, A \,L v \,+\, B\,L( \rho^2 ) \,\leq \,A ( - c_0 \sum G^{ii} - 1 ) + B C ( 1 + \sum G^{ii} ) \quad\mbox{in}\,\,\Omega_{\delta}
\end{equation}
which will be used later.  Besides, we also need to estimate $L ( \nabla'_k u )$. For this, we first apply the formula
\begin{equation*}
\nabla'_{ij} ( \nabla'_k u ) = \,\nabla'_k \nabla'_{ij} u + \Gamma^{l}_{ik} \nabla'_{jl} u + \Gamma_{jk}^l \nabla'_{il} u + \nabla'_{k} \Gamma_{ij}^l \,u_l
\end{equation*}
to obtain
\begin{equation} \label{eq3-30}
\begin{aligned}
L ( \nabla'_k u ) =\,& G^{ij} \nabla'_{ij} ( \nabla'_k u ) + G^i \, \nabla'_i (\nabla'_k u) \\
=  \,& \big( G^{ij}\nabla'_k \nabla'_{ij} u +  G^i \, \nabla'_{k i} u \big) \, + G^{ij} \Gamma^{l}_{ik} \nabla'_{jl} u + G^{ij} \Gamma_{jk}^l \nabla'_{il} u \\ & + G^{ij} \nabla'_{k} \Gamma_{ij}^l \,u_l   +  G^i \, \Gamma_{ik}^l \,u_l
\end{aligned}
\end{equation}
By \eqref{eq2-14} and \eqref{eq3-18} we have
\begin{equation*}
G^{ij} \Gamma^{l}_{ik} ( \nabla'_{jl} u + \,u \,\delta_{jl}) =\, F^{st} \gamma^{is} \gamma^{jt} \Gamma^{l}_{ik}  \cdot  \gamma_{jp}\, a_{pq}\,\gamma_{ql}\\
=\,( \gamma^{is} \Gamma^{l}_{ik} \gamma_{q l } ) \,F^{st} \,a_{t q}
\end{equation*}
The term $G^{ij} \Gamma_{jk}^l \nabla'_{il} u$ can be computed similarly. Taking the covariant derivative of \eqref{eq2-13} and applying Corollary \ref{GsGu} we have
\[ | G^{ij}\nabla'_k \nabla'_{ij} u +  G^i \, \nabla'_{k i} u | \leq C + | (\psi_u - G_u) u_k | \leq C (1 + \sum G^{ii}) \]
From all these above, \eqref{eq3-30} can be estimated as
\begin{equation} \label{eq3-19}
\vert L ( \nabla'_k u ) \vert \leq  \, C ( 1 + \sum G^{ii} )
\end{equation}

For fixed $\alpha < n$, choose $B$ sufficiently large such that
\[ \Psi \pm \nabla'_\alpha (u - \varphi) \geq 0 \quad \mbox{on} \quad \partial\Omega_\delta   \]
From  \eqref{eq3-29} and \eqref{eq3-19}
\[ L ( \Psi \pm \nabla'_\alpha (u - \varphi) ) \leq A ( - c_0 \sum G^{ii} - 1 ) + B C ( 1 + \sum G^{ii} )  \]
Then choose $A$ sufficiently large such that
\[ L ( \Psi \pm \nabla'_\alpha (u - \varphi) ) \leq 0 \quad \mbox{in} \quad \Omega_\delta \]
Applying the maximum principle we have
\[ \Psi \pm \nabla'_\alpha (u - \varphi) \geq 0  \quad \mbox{in} \quad \Omega_\delta \]
which implies
\begin{equation} \label{eq3-28}
\vert \nabla'_{\alpha n} u (z_0) \vert \leq C
\end{equation}

It remains to estimate the double normal derivative $\nabla'_{n n} u $ on $\partial\Omega$. By the strict local convexity of $u$, we only need to give an upper bound
\[ \nabla'_{n n} u  \leq C \quad \mbox{on} \quad \partial\Omega \]
The following proof is inspired by an idea of Trudinger \cite{Tru}. First we prove that
\begin{equation} \label{eq13}
 M := \min\limits_{z \in \partial\Omega}\,\min\limits_{\xi \in T_z( \partial \Omega ), |\xi| = 1} ( \nabla'_{\xi\xi} u + u ) \geq c_1
\end{equation}
for some constant $c_1 > 0$. Assume that $M$ is achieved at $z_1 \in \partial \Omega$ in the direction of $\xi_1$.  Let $e_1, \ldots, e_n$ be the local orthonormal frame field around $z_1$ on $\Omega \subset \mathbb{S}^n$ such that $e_1 (z_1) = \xi_1$. Thus we have
\begin{equation*}
\begin{aligned}
M = & \nabla'_{\xi_1 \xi_1} u (z_1) + u(z_1) =  \nabla'_{11} u (z_1) + u(z_1) \\
= & (\nabla'_{11} \underline{u} (z_1) + \underline{u}(z_1)) - ( u - \underline{u} )_n (z_1) \,\Gamma_{11}^n (z_1)
\end{aligned}
\end{equation*}
Assume $(u - \underline{u})_n (z_1) \,\Gamma_{11}^n (z_1) > \frac{1}{2} ( \nabla'_{11} \underline{u} (z_1) + \underline{u} (z_1))$, for, otherwise we are done. Since the function $\Gamma_{11}^n$ is continuous and $0 < ( u - \underline{u} )_n (z_1) \leq C$,
\[ \Gamma_{11}^n (z) \geq \frac{1}{2} \,\Gamma_{11}^n (z_1) \geq c_2 > 0 \quad \quad\mbox{on} \quad
 \Omega_\delta = \{ z \in \Omega | \,\mbox{dist}_{\mathbb{S}^n}(z_1, z) < \delta  \}  \]
for some small $\delta > 0$.
Now consider
\[ \Phi = \frac{\nabla'_{11} \varphi + \varphi - M }{\Gamma_{11}^n} - (u - \varphi)_n \]
Since $\nabla'_{11}(u - \varphi) = - (u - \varphi)_n\, \Gamma_{11}^n$ on $\partial\Omega$,
\[ \nabla'_{11} \varphi + \varphi - (u - \varphi)_n\, \Gamma_{11}^n = \nabla'_{11} u + u \geq M \]
Thus, $\Phi \geq 0$ on $\partial\Omega \cap \Omega_\delta$.
Also, by \eqref{eq3-19}
\begin{equation}\label{eq3-31}
L(\Phi) = \, L \Big( \frac{\nabla'_{11} \varphi + \varphi - M }{\Gamma_{11}^n} + \varphi_n \Big) - L u_n
\leq  \, C ( 1 + \sum G^{ii} )
\end{equation}
Now choose $B$ sufficiently large such that $\Psi + \Phi \geq 0$ on $\partial \Omega_\delta$. By \eqref{eq3-29} and \eqref{eq3-31} we then choose $A$ sufficiently large such that
$L(\Psi + \Phi) \leq 0$ in $\Omega_{\delta}$. Since $(\Psi + \Phi)(z_1) = 0$, we have $(\Psi + \Phi)_n (z_1) \geq 0$ and consequently
\[ u_{nn} (z_1) \leq C. \]
Together with \eqref{eq3-27} and \eqref{eq3-28}, a bound $|\nabla'^2 u (z_1)| \leq C$ can be obtained and hence a bound for all principle curvatures at $z_1$ by \eqref{eq3-18}. Therefore, the principle curvatures at $z_1$ admit a uniform positive lower bound by \eqref{eq1-7}, which in turn yields a positive lower bound for the eigenvalues of $\nabla'^2 u (z_1) + u(z_1) I $.  Hence \eqref{eq13} is proved.

By \eqref{eq13} and Lemma 1.2 in \cite{CNSIII} there exists a constant $R > 0$ depending on the estimates \eqref{eq3-27} and \eqref{eq3-28} such that if $u_{nn}(z_0) \geq R$ and $z_0 \in \partial \Omega$, then the eigenvalues $(\lambda_1, \ldots, \lambda_n)$ of $\nabla'^2 u (z_0) + u(z_0) I $ satisfy
\[ \frac{c_1}{2} \leq \lambda_\alpha \leq C, \quad \alpha = 1, \ldots, n - 1, \quad \lambda_n \geq \frac{R}{2} \]
Consequently
\[ \nabla'^2 u (z_0) + u(z_0) I \geq X^{-1} \Lambda X  \]
where $X$ is an orthogonal matrix and  $\Lambda = \mbox{diag}\Big( \frac{c_1}{2},\, \ldots,\, \frac{c_1}{2},\, \frac{R}{2} \Big)$.
Hence at $z_0$,
\begin{equation*}
\begin{aligned}
& G( \nabla'^2 u, \nabla' u, u ) (z_0) \\
= \,\, &F \Big(\frac{- \phi \zeta'(u)}{\sqrt{\phi^2 + \zeta'^2 |\nabla' u|^2 }} \, g^{-1/2} \big( \nabla'^2 u (z_0) + u(z_0) I \big)\, g^{-1/2} \Big) \\
\geq \,\, &F \Big(\frac{- \phi \zeta'(u)}{\sqrt{\phi^2 + \zeta'^2 |\nabla' u|^2 }} \, g^{-1/2} X^{-1} \Lambda X\, g^{-1/2} \Big) \\
=  \,\,& F \Big(\frac{- \phi \zeta'(u)}{\sqrt{\phi^2 + \zeta'^2 |\nabla' u|^2 }} \, \Lambda^{1/2} \,X \, g^{-1} \,X^{-1}\,\Lambda^{1/2} \Big) \\
\geq \, \,& F \Big( \frac{- \phi \zeta'(u)}{\sqrt{\phi^2 + \zeta'^2 |\nabla' u|^2 }} \, \Lambda^{1/2} \,X  \, \frac{1}{ \phi^2 + \zeta'^2(u) |\nabla' u|^2} \,I \,X^{-1}\,\Lambda^{1/2} \Big) \\
=   \,\,& F \Big( \frac{- \phi \zeta'(u)}{(\phi^2 + \zeta'^2 |\nabla' u|^2 )^{3/2}} \,\Lambda \Big) \geq \,F ( \tilde{c} \,\Lambda )
\end{aligned}
\end{equation*}
By \eqref{eq1-11} we can choose $R$ sufficiently large such that
$F ( \tilde{c} \,\Lambda ) > \sup_{\bar{\Omega} \times [C_0^{-1}, C_0]} \psi$. It follows that
\[ G( \nabla'^2 u, \nabla' u, u ) (z_0) > \psi(z_0, u(z_0))\]
which is a contradiction to equation \eqref{eq2-13}. Hence $\nabla'_{nn} u \leq R$ on $\partial\Omega$ and \eqref{eq3-32} is proved.

\vspace{5mm}

\section{Global curvature estimates}

\vspace{3mm}

The ideas for deriving global $C^2$ a priori estimates for starshaped compact or convex hypersurfaces can be found in \cite{JL05, SX15, GRW15} (see also \cite{CNSV} for vertical graphs). For strictly locally convex hypersurfaces, we synthesize the ideas in \cite{JL05, SX15} to estimate from above for the largest principal curvature $\kappa_{\max} = \max_{1 \leq i \leq n} \kappa_i$ of $\Sigma$, which, together with \eqref{eq5-1}, \eqref{eq3-32} and \eqref{eq3-5} implies an estimate for $\Vert \rho \Vert_{C^2(\overline{\Omega})}$.

\begin{thm} \label{Theorem 1}
Assume \eqref{eq1-5}, \eqref{eq1-6} and \eqref{eq1-10}. Let $ \Sigma = \{ ( z, \rho (z) ) |\, z \in \Omega \subset \mathbb{S}^n \} \subset N^{n + 1} (K)$ be a strictly locally convex $C^4$ hypersurface satisfying equation
\eqref{eq1-0} for some positive $C^2$ function $\psi$ defined on $N^{n + 1} (K)$. Suppose in addition that
\begin{equation*}
 0 < C_0^{-1} \leq \rho(z) \leq C_0 < \rho_U^K \quad \mbox{and}  \quad |\nabla' \rho| \leq C_1 \quad \mbox{on} \quad \overline{\Omega}
\end{equation*}
where $C_0$ and $C_1$ are uniform positive constants. Then there exists a constant $C$ depending only on $C_0$, $C_1$, $\Vert\psi\Vert_{C^2}$ and $\inf\psi$ such that
\[ \max\limits_{\substack{ z \in \Omega \\  i = 1, \ldots, n}}  \kappa_i (z)  \leq C \,( 1 +  \max\limits_{\substack{z \in \partial \Omega\\ i = 1, \ldots, n}}  \kappa_i (z) )  \]
\end{thm}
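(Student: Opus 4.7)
The plan is to apply the maximum principle to a test function involving the largest principal curvature, following the approach of \cite{JL05, SX15}. Specifically, consider the auxiliary function on the unit tangent bundle of $\Sigma$,
\[
W(p, \xi) \;=\; \log h_{\xi\xi}(p) + \Phi(\rho(p)),
\]
where $\Phi$ is a correction built from $\phi(\rho)$ and $\phi'(\rho)$, to be chosen. If the maximum of $W$ is achieved on $\partial\Sigma$ the stated inequality is immediate; otherwise pick an interior maximum point $p_0$ in direction $\xi = E_1$ and a local orthonormal frame $E_1, \ldots, E_n$ on $\Sigma$ near $p_0$ such that $\{h_{ij}\}(p_0)$ is diagonal with $h_{11}(p_0) = \kappa_1 = \kappa_{\max}$.

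At $p_0$ the critical-point conditions read
\[
\nabla_i h_{11} + h_{11}\nabla_i\Phi = 0, \qquad F^{ii}\Bigl(\frac{\nabla_{ii} h_{11}}{h_{11}} - \frac{(\nabla_i h_{11})^2}{h_{11}^2} + \nabla_{ii}\Phi\Bigr) \leq 0.
\]
To unpack $F^{ii}\nabla_{ii} h_{11}$, I would differentiate $F(h^i_j) = \psi$ twice along $E_1$ and use the Codazzi identity (fully symmetric for hypersurfaces in space forms, since the ambient Riemann tensor has vanishing normal component) together with the Gauss equation to commute covariant derivatives. This produces a Simons-type identity of the form
\[
F^{ii}\nabla_{ii} h_{11} \;=\; \nabla_{11}\psi - F^{ij,kl}\nabla_1 h_{ij}\nabla_1 h_{kl} + \kappa_1 \sum F^{ii}\kappa_i^2 - \kappa_1^2 \sum F^{ii}\kappa_i + K\bigl(\text{lower-order trace terms in } F^{ii}, \kappa_i\bigr),
\]
the $K$-dependent terms coming from the curvature of $N^{n+1}(K)$ through Gauss. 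Substituted into the inequality above, this yields a differential inequality for $\kappa_1$.

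The main obstacle is extracting a genuine upper bound on $\kappa_1$ from this inequality. The dominant positive term $\kappa_1\sum F^{ii}\kappa_i^2$ is bounded below using \eqref{eq1-10}, which on $\{\psi_0 \leq f \leq \psi_1\}$ delivers $\sum F^{ii}\kappa_i \geq \sigma_0 > 0$; combined with Cauchy--Schwarz this provides a lower bound for $\sum F^{ii}\kappa_i^2$ proportional to $\kappa_1$ whenever $\kappa_1$ is large. The concavity of $F$ is exploited via the Andrews--Gerhardt inequality
\[
-F^{ij,kl}\nabla_1 h_{ij}\nabla_1 h_{kl} \;\geq\; 2\sum_{i>1}\frac{F^{ii}-F^{11}}{\kappa_1 - \kappa_i}(\nabla_1 h_{1i})^2,
\]
which, together with the first-order condition $\nabla_i h_{11} = -h_{11}\nabla_i\Phi$, absorbs the bad gradient term $F^{ii}(\nabla_i h_{11})^2/h_{11}^2$ coming from $\nabla_{ii}\log h_{11}$. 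A standard two-case split (according to whether $\kappa_n \geq \theta\kappa_1$ or not, for some small $\theta$) settles the remaining algebraic juggling. The correction $\Phi(\rho)$ is then chosen so that $F^{ii}\nabla_{ii}\Phi$ furnishes a negative contribution of the right size to absorb the $K$-dependent terms uniformly across the three space forms (the case $K = 1$ being the most delicate, as those curvature terms carry the unfavorable sign), forcing $\kappa_1(p_0) \leq C(1 + \max_{\partial\Omega}\kappa_i)$ as claimed.
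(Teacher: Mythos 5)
Your overall strategy (maximum principle applied to the logarithm of the largest principal curvature plus a correction, a Simons-type commutation identity, Andrews--Gerhardt to exploit concavity, and \eqref{eq1-10} to produce a positive term) is the right family of argument, but two points break the proof as written. First, the signs in your Simons identity are reversed: the Gauss equation gives $h_{11ii}=h_{ii11}-\kappa_1\kappa_i^2+\kappa_1^2\kappa_i-K(\kappa_i-\kappa_1)$, so the cubic contribution to $F^{ii}\nabla_{ii}h_{11}$ is $-\kappa_1\sum F^{ii}\kappa_i^2+\kappa_1^2\sum F^{ii}\kappa_i$, not the reverse. The term you call the ``dominant positive term,'' $\kappa_1\sum F^{ii}\kappa_i^2$, therefore enters with a minus sign; the genuine good term is $\kappa_1^2\sum F^{ii}\kappa_i\ge\sigma_0\kappa_1^2$ from \eqref{eq1-10}, and the two cubic terms together yield only $\kappa_1\sum f_i\kappa_i(\kappa_1-\kappa_i)\ge 0$, which can degenerate.

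Second---and this is the substantive gap---your test function omits the support function. The paper uses $\Theta=\ln\kappa_{\max}-\ln(\tau-a)+\beta\,\Phi$ with $\tau=\langle V,\nu\rangle\ge 2a>0$. The term $-\ln(\tau-a)$ is not decorative: since $\tau_{ii}=\phi\sum_m\rho_m h_{iim}+\phi'\kappa_i-\tau\kappa_i^2$, it contributes $+\frac{\tau}{\tau-a}\sum f_i\kappa_i^2$, which strictly dominates the $-\sum f_i\kappa_i^2$ coming from the Gauss commutation and leaves the positive term $\frac{a}{\tau-a}\sum f_i\kappa_i^2\ge\frac{a}{\tau-a}f_1\kappa_1^2$. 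That term is what absorbs the residual gradient term of size $Cf_1$ surviving after Andrews--Gerhardt (the paper partitions indices by $f_j\le 2f_1$ versus $f_j>2f_1$ precisely so that the unabsorbed part carries a factor $f_1$, matching $f_1\kappa_1^2$). Your proposed substitute---that \eqref{eq1-10} plus Cauchy--Schwarz gives $\sum F^{ii}\kappa_i^2\gtrsim\kappa_1$---is false: Cauchy--Schwarz only yields $\sum f_i\kappa_i^2\ge\sigma_0^2/\sum f_i$, and $\sum f_i$ is not bounded above, while the alternative bound $\sum f_i\kappa_i^2\ge f_1\kappa_1^2$ is useless by itself because $f_1$ may be arbitrarily small. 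A smaller remark: the $K$-dependent term is $+K\sum f_i$, which is favorable for $K=1$ and unfavorable for $K=-1$; this is why the paper takes $\beta=u_L^K$, i.e.\ $\beta=0$ for $K=0,1$ and $\beta=1$ for $K=-1$, so that $\beta\phi'+K\ge 0$---the delicate case is hyperbolic space, not the sphere as you suggest.
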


\begin{proof}
Since $\kappa_i > 0$ for all $i$ on $\Sigma$, it suffices to estimate from above for the largest principal curvature $\kappa_{\max}$ of $\Sigma$. To construct a test function, we will make use of the following ingredients:
\[  \Phi(\rho) = \int_0^{\rho} \phi(r) \,d r \]
and the support function
\[ \tau = \bar{g}( V,\, \nu ) = \langle V,\, \nu \rangle  \]
Note that $\tau$ has a positive lower bound
\begin{equation*}
\tau =  \,\Big\langle \phi(\rho) \frac{\partial}{\partial \rho}, \frac{- \nabla' \rho + \phi^2 \,\frac{\partial}{\partial \rho}}{\sqrt{\phi^4 + \phi^2 |\nabla' \rho|^2}} \Big\rangle  \geq \, 2 a > 0
\end{equation*}
Now define the test function
\begin{equation*}
\Theta = \ln \kappa_{\max} - \,\ln ( \tau - a ) + \beta\, \Phi
\end{equation*}
Assume $\Theta$ achieves its maximum value at ${\bf x}_0 = (z_0, \rho(z_0))\in \Sigma$.
Choose a local orthonormal frame $E_1, \ldots, E_n$ around ${\bf x}_0$ on $\Sigma$ such that $h_{ij}({\bf x}_0) = \kappa_i \,\delta_{ij}$, where $\kappa_1, \ldots, \kappa_n$ are the principal curvatures of $\Sigma$ at ${\bf x}_0$.  We may assume $\kappa_1 = \kappa_{\max} ({\bf x}_0) \geq 1$. Then, at ${\bf x}_0$,
\begin{equation} \label{eq2-3}
\frac{h_{11i}}{h_{11}}  -  \frac{{\tau}_i}{\tau - a} + \beta \,\Phi_i = 0
\end{equation}

\begin{equation} \label{eq2-4}
\frac{h_{11ii}}{h_{11}} - \frac{h_{11i}^2}{h_{11}^2} \,- \frac{{\tau}_{ii}}{\tau - a} + \Big(\frac{{\tau}_i}{\tau - a}\Big)^2 + \, \beta \,\Phi_{ii}\, \leq 0
\end{equation}
By Codazzi equation and Gauss equation we have
\begin{equation} \label{eq2-9}
\nabla_l h_{ij} = \nabla_j h_{il}
\end{equation}
and
\begin{equation} \label{eq2-5}
 h_{iill} = h_{llii} + \kappa_l \kappa_i^2 - \kappa_l^2 \kappa_i + K (\kappa_i - \kappa_l)
\end{equation}

In the rest of this section all computations are evaluated at ${\bf x}_0$.
Under the local orthonormal frame $E_1, \ldots, E_n$, equation \eqref{eq1-0} appears as
\begin{equation} \label{eq2-32}
 F(h) = f(\lambda (h)) = \psi(V) \quad\quad\mbox{where}\quad h = \{ h_{ij} \}
\end{equation}
Covariantly differentiate \eqref{eq2-32} twice we have
\begin{equation} \label{eq2-6}
 F^{ii} h_{iil} = \phi'\, d_V\psi (E_l)
\end{equation}
and
\begin{equation} \label{eq2-7}
F^{ii} h_{ii11} + F^{ij,\, kl} h_{ij1} h_{kl1} \geq - C \kappa_1
\end{equation}
Here we have used the property of the conformal Killing field $V$,
\[ \nabla_{E_l} V = \phi' \,E_l  \]
Combining \eqref{eq2-4}, \eqref{eq2-5} and \eqref{eq2-7} we have
\begin{equation} \label{eq2-8}
\begin{aligned}
& - \frac{1}{\kappa_1} F^{ij, kl} h_{ij1} h_{kl1} - \,\frac{1}{\kappa_1^2} \sum f_i  h_{11i}^2 - \sum f_i \kappa_i^2 \, + ( \kappa_1 - \frac{K}{\kappa_1} ) \sum f_i \kappa_i \\
& \,+ K \, \sum f_{i} - C \,- \frac{1}{\tau - a} \sum F^{ii} {\tau}_{ii} + \,\frac{1}{(\tau - a)^2} \sum f_i {\tau}_i^2 \,+ \,\beta
  \sum F^{ii} \Phi_{ii}  \leq 0
\end{aligned}
\end{equation}
Now we partition $\{1, \ldots, n\}$ into two parts,
\[ I = \{ j: \,f_j \leq 2 f_1 \}, \quad \quad  J = \{ j: \,f_j >  2 f_1 \}\]
By \eqref{eq2-3}, for any $\epsilon > 0$,
\begin{equation} \label{eq2-33}
 \frac{1}{\kappa_1^2} \sum\limits_{i\in I} f_i  h_{11i}^2 \,\leq  \, C (1 + \epsilon^{-1}) \beta^2 f_1 \sum \Phi_i^2  + \frac{(1 + \epsilon)}{(\tau - a)^2} \sum f_i {\tau}_i^2
\end{equation}
Taking \eqref{eq2-33}, \eqref{eq2-6} and the following equations (see Lemma 2.2 and Lemma 2.6 in \cite{GL13} for the proof)
\[ \Phi_i = \phi(\rho) \,\rho_i, \quad\quad \Phi_{ii} = \phi' - \tau \,\kappa_i\]
\[ {\tau}_i = \phi(\rho)\, \rho_i\, \kappa_i \]
\[ {\tau}_{ii} = \phi(\rho)\,\sum\limits_m \rho_m\, h_{iim} + \phi'(\rho)\,\kappa_i - \tau \,\kappa_i^2 \]
into \eqref{eq2-8} yields
\begin{equation} \label{eq2-10}
\begin{aligned}
&   - \frac{1}{\kappa_1} F^{ij, kl} h_{ij1} h_{kl1} - \,\frac{1}{\kappa_1^2} \sum\limits_{i \in J} f_i \, h_{11i}^2 \\  & + \,\big( \frac{a}{\tau - a} - \frac{C \epsilon}{( \tau - a )^2} \big) \sum f_{i} \kappa_i^2 \,- C \beta^2 (1 + \epsilon^{-1})\, f_1 \\
& + ( \kappa_1 - \frac{K}{\kappa_1} - \tau \beta - \frac{\phi'}{\tau - a} ) \sum f_i \kappa_i \,+ ( \beta \phi' + K ) \sum f_{i} \,- C - \frac{C}{a}  \leq 0
\end{aligned}
\end{equation}
Using an inequality due to Andrews \cite{And} and Gerhardt \cite{Ger} and applying \eqref{eq2-9}
\begin{equation*}
- \frac{1}{\kappa_1} F^{ij, kl} h_{ij1} h_{kl1} \geq  \frac{1}{\kappa_1} \sum\limits_{i \neq j} \frac{f_i - f_j}{\kappa_j - \kappa_i} h_{ij1}^2
\geq \frac{2}{\kappa_1} \sum\limits_{i \geq 2} \frac{f_i - f_1}{\kappa_1 - \kappa_i} h_{11i}^2 \geq \frac{1}{ \kappa_1^2} \sum\limits_{i \in J} f_i \, h_{11i}^2
\end{equation*}
where the fractions are interpreted as limits whenever the denominators are zero. Inserting it into \eqref{eq2-10}, applying assumption \eqref{eq1-10}, choosing $\epsilon = {a^2}/{(2 C)}$ and $\beta = u_L^K$ we obtain
\begin{equation*}
\frac{a^2}{2 (\tau - a)^2}  \sum f_{i} \kappa_i^2 \, - C \beta^2 ( 1 + \frac{1}{a^2} ) \, f_1 \,
+ ( \kappa_1 - 1 - \tau \beta - \frac{\phi'}{\tau - a} ) \,\sigma_0 \,- C - \frac{C}{a} \leq 0
\end{equation*}
Since $\sum f_{i} \kappa_i^2  \geq f_1 \kappa_1^2 $, a uniform upper bound for $\kappa_1$ follows easily from the above inequality. Consequently, we obtain a uniform upper bound for $\kappa_{\max}$ on $\Sigma$.
\end{proof}

\vspace{5mm}

\section{Existence in $\mathbb{R}^{n+1}$ and $\mathbb{H}^{n+1}$}

\vspace{3mm}

In this section, we will use classical continuity method and degree theory developed by Y. Y. Li \cite{Li89} to prove the existence of solution to the Dirichlet problem \eqref{eq2-26}--\eqref{eq2-25}. Under the transformation $\overline{\rho} = \zeta ( \underline{u} )$ and $\underline{u} = \eta ( \underline{v} )$, the subsolution condition \eqref{eq1-4} can be expressed as
\begin{equation} \label{eq2-29}
\left\{ \begin{aligned}
\mathcal{G}(\nabla'^2 \underline{v}, \nabla' \underline{v}, \underline{v}) \geq & \,\psi(z, \underline{v})  \quad  \mbox{in} \quad \Omega\\
\underline{v} = & \,\varphi  \quad \quad \mbox{on} \quad \partial \Omega \end{aligned} \right.
\end{equation}
Assume that $\underline{v}$ is not a solution of \eqref{eq2-26}, for otherwise we are done.
We consider the following two auxiliary equations.

\begin{equation} \label{eq6-8}
\left\{ \begin{aligned} \mathcal{G} (\nabla'^2 v, \nabla' v, v) \, =  & \, \big( t \epsilon + ( 1 - t ) \frac{\underline{\psi} (z)}{ \xi (\underline{v}) } \big)  \,\xi (v) \quad\quad & \mbox{in} \quad \Omega \\ v \,  = & \, \underline{v} \quad \quad & \mbox{on} \quad \partial\Omega \end{aligned} \right.
\end{equation}
and
\begin{equation} \label{eq6-9}
\left\{ \begin{aligned} \mathcal{G} (\nabla'^2 v, \nabla' v, v) \, =  & \,  t \, \psi(z, v) + ( 1 - t ) \,\epsilon \,\xi (v) \quad\quad & \mbox{in} \quad \Omega \\ v \,  = & \, \underline{v} \quad \quad & \mbox{on} \quad \partial\Omega \end{aligned} \right.
\end{equation}
where $t \in [0, 1]$, $\underline{\psi} (z) = \mathcal{G}( \nabla'^2 \underline{v}, \nabla' \underline{v}, \underline{v} ) (z)$, $\epsilon$ is a small positive constant such that
\begin{equation} \label{eq6-20}
\underline{\psi} (z) \,>  \epsilon \,\xi(\underline{v}) \quad\mbox{in}\quad \Omega
\end{equation}
and $\xi(v) = e^{2v}$ if $K = 0$ while $\xi(v) = \sinh v$ if $K = - 1$. The existence results in $\mathbb{R}^{n+1}$ was given in \cite{Su16} where the author assumed the existence of a strict subsolution. In this section, we will consider the case when $K = 0$ or $K = -1$ with the subsolution assumption.

\begin{lemma} \label{Lemma6-1}
Let $\psi(z)$ be a positive function defined on $\Omega$. For $z \in \Omega$ and a strictly locally convex function $v$ near $z$, if
\[\mathcal{G}(\nabla'^2 v, \nabla' v, v) (z) =  F(a_{ij}[v])(z) = f (\kappa [v])(z) = \psi(z) \,\xi(v)(z) \]
then
\[ \mathcal{G}_v (\nabla'^2 v, \nabla' v, v) (z) - \,\psi(z) \,\xi'(v)(z)  < 0\]
\end{lemma}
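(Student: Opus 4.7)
The plan is to compute $\mathcal{G}_v(r,p,v) = F^{ij}(A(r,p,v))\,\partial a_{ij}/\partial v$ explicitly from the reformulation \eqref{eq6-1}, re-express the result in terms of $\sum F^{ii}$ and $F^{ij} a_{ij}$, and then combine the concavity of $f$ with the assumed equation to conclude.

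Since $w$ and $\tilde{\gamma}^{ik}$ in \eqref{eq6-1} depend only on $\nabla' v$, treating $(r,p,v)$ as independent arguments yields directly
\[\frac{\partial a_{ij}}{\partial v} = \frac{1}{w}\bigl(\eta'(v)\,\delta_{ij} + \eta''(v)\,\tilde{\gamma}^{ik} r_{kl}\tilde{\gamma}^{lj}\bigr).\]
Substituting $\tilde{\gamma}^{ik} r_{kl}\tilde{\gamma}^{lj} = (w\,a_{ij} - \eta\,\delta_{ij})/\eta'$, obtained by inverting \eqref{eq6-1}, gives
\[\mathcal{G}_v = \frac{1}{w}\Bigl(\eta' - \frac{\eta''\eta}{\eta'}\Bigr)\sum F^{ii} + \frac{\eta''}{\eta'}\,F^{ij} a_{ij}.\]
By concavity of $f$ together with $f(0)=0$ from \eqref{eq1-7}, exactly as invoked in the proof of Corollary \ref{GsGu}, I have $F^{ij} a_{ij} = \sum f_i \kappa_i \leq f(\kappa) = \psi(z)\,\xi(v)$.

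The conclusion then comes by case analysis. When $K=0$, $\eta(v)=e^v$, so $\eta'=\eta''=\eta$, the coefficient of $\sum F^{ii}$ vanishes, and $\eta''/\eta'=1$; hence $\mathcal{G}_v = F^{ij} a_{ij} \leq \psi\,e^{2v}$, which is strictly less than $\psi\,\xi'(v)=2\psi\,e^{2v}$ since $\psi>0$. When $K=-1$, $\eta(v)=\cosh v$, $\eta'(v)=\sinh v$, $\eta''(v)=\cosh v$, so $\eta'-\eta''\eta/\eta'=-1/\sinh v$ and $\eta''/\eta'=\coth v$; therefore
\[\mathcal{G}_v \leq -\frac{1}{w\sinh v}\sum F^{ii} + \coth v \cdot \psi(z)\sinh v = \psi(z)\cosh v - \frac{1}{w\sinh v}\sum F^{ii},\]
which is strictly less than $\psi(z)\,\xi'(v)=\psi(z)\cosh v$ because $F^{ii}>0$ and, by \eqref{eq6-19}, $\sinh v>0$ on the admissible range.

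No serious obstacle is expected; the argument is essentially algebraic. The only delicate point is producing a strict rather than merely weak inequality: for $K=0$ the strictness is built into the factor $2$ in $\xi'(v)/\xi(v)$, whereas for $K=-1$ it is supplied by the manifestly negative extra term $-\frac{1}{w\sinh v}\sum F^{ii}$, which survives thanks to $v>0$ in the hyperbolic setting.
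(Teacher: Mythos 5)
Your proof is correct. The paper itself gives no argument for this lemma (it defers to the reference [Sui1]), so there is nothing in-paper to compare against; your computation checks out in every step: since $w$ and $\tilde{\gamma}^{ik}$ depend only on $\nabla' v$, the formula $\partial a_{ij}/\partial v = \frac{1}{w}\bigl(\eta'-\frac{\eta''\eta}{\eta'}\bigr)\delta_{ij}+\frac{\eta''}{\eta'}a_{ij}$ follows from \eqref{eq6-1}, the bound $F^{ij}a_{ij}=\sum f_i\kappa_i\le f(\kappa)=\psi\,\xi(v)$ is exactly the concavity-plus-$f(0)=0$ argument already used in Corollary \ref{GsGu}, and the strictness is correctly supplied by $-\psi e^{2v}<0$ when $K=0$ and by $-\frac{1}{w\sinh v}\sum F^{ii}<0$ (valid since $\sum f_i>0$ and $v>0$ by \eqref{eq6-19}) when $K=-1$.
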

\begin{proof}
The proof can be found in \cite{Sui1}.
\end{proof}

\begin{lemma}  \label{Lemma6-2}
For any fixed $t \in [0, 1]$, if $\underline{V}$ and $v$ are strictly locally convex subsolution and solution to \eqref{eq6-8}, then $v \geq \underline{V}$.  Thus the Dirichlet problem \eqref{eq6-8} has at most one strictly locally convex solution.
\end{lemma}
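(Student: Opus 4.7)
The plan is to argue by contradiction via a ``frozen derivatives'' comparison at an interior maximum of $w := \underline{V} - v$. Both $\underline{V}$ and $v$ satisfy the boundary condition in \eqref{eq6-8}, so $w$ vanishes on $\partial\Omega$; if the conclusion fails, $w$ attains a positive maximum at some interior point $z_0 \in \Omega$, where $\nabla'\underline{V}(z_0) = \nabla' v(z_0)$, $\nabla'^2 \underline{V}(z_0) \leq \nabla'^2 v(z_0)$, and $v_0 := v(z_0) < \underline{V}_0 := \underline{V}(z_0)$.

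Setting $\psi_t(z) := t\epsilon + (1-t)\,\underline{\psi}(z)/\xi(\underline{v}(z))$, I will examine the one-variable auxiliary function
\[
\Phi(\tau) := \mathcal{G}(\nabla'^2 v(z_0), \nabla' v(z_0), \tau) \,-\, \psi_t(z_0)\,\xi(\tau), \qquad \tau \in [v_0, \underline{V}_0].
\]
The first task is to check that $(\nabla'^2 v(z_0), \nabla' v(z_0), \tau)$ is admissible (in the sense of \eqref{eq6-6}) for every such $\tau$. For $K = 0$ the admissibility condition in the $v$-variable is independent of the value of $v$, so it is inherited from the strict local convexity of $v$ at $z_0$. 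For $K = -1$ it reduces to $\tilde{\gamma}\,\nabla'^2 v(z_0)\,\tilde{\gamma} > -\coth(\tau)\,I$: at $\tau = \underline{V}_0$ this follows from the strict local convexity of $\underline{V}$ at $z_0$ combined with $\nabla'\underline{V}_0 = \nabla' v_0$ and $\nabla'^2\underline{V}_0 \leq \nabla'^2 v_0$, and for smaller $\tau$ the condition only weakens because $-\coth(\tau)$ is increasing in $\tau$.

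Granting admissibility, $\Phi$ has three key properties. First, $\Phi(v_0) = 0$ since $v$ solves the equation at $z_0$. Second, by ellipticity of $\mathcal{G}$ in its second-derivative slot (integrated along the admissible segment from $\nabla'^2\underline{V}_0$ to $\nabla'^2 v_0$) together with $\nabla'\underline{V}_0 = \nabla' v_0$ and the subsolution inequality,
\[
\mathcal{G}(\nabla'^2 v_0, \nabla' v_0, \underline{V}_0) \,\geq\, \mathcal{G}(\nabla'^2\underline{V}_0, \nabla'\underline{V}_0, \underline{V}_0) \,\geq\, \psi_t(z_0)\,\xi(\underline{V}_0),
\]
hence $\Phi(\underline{V}_0) \geq 0$. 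Third, at every $\tau \in [v_0, \underline{V}_0]$ with $\Phi(\tau) = 0$, Lemma \ref{Lemma6-1} applied at the admissible triple yields $\Phi'(\tau) = \mathcal{G}_v - \psi_t(z_0)\,\xi'(\tau) < 0$.

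Combining these, Lemma \ref{Lemma6-1} at $\tau = v_0$ gives $\Phi'(v_0) < 0$, so $\Phi$ is strictly negative on some $(v_0, v_0 + \delta)$; since $\Phi(\underline{V}_0) \geq 0$, the first positive zero $\tau_1 := \inf\{\tau \in (v_0, \underline{V}_0] : \Phi(\tau) \geq 0\}$ lies in $(v_0, \underline{V}_0]$, satisfies $\Phi(\tau_1) = 0$, and has $\Phi'(\tau_1) \geq 0$ (because $\Phi$ is rising to zero from the negative side). But Lemma \ref{Lemma6-1} at $\tau_1$ forces $\Phi'(\tau_1) < 0$, the desired contradiction. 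Hence $v \geq \underline{V}$ in $\Omega$, and uniqueness follows by applying the comparison with any two solutions in place of $\underline{V}$ and $v$. The main subtle step is the admissibility check for the frozen-derivative triple along $[v_0, \underline{V}_0]$ in the $K = -1$ case, where the monotonicity of $-\coth(\tau)$ — and hence the particular choice of the reformulation \eqref{eq6-2} — becomes essential.
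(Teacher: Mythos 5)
Your proof is correct, and it is exactly the standard deformation argument that the paper defers to via the citation of Lemma 3.40 in \cite{Sui2}: compare at an interior maximum of $\underline{V}-v$, freeze $(\nabla'^2 v,\nabla' v)$ there, use ellipticity plus the subsolution inequality to get $\Phi(\underline{V}_0)\geq 0$, and rule out a first zero of $\Phi$ on $(v_0,\underline{V}_0]$ by the sign condition $\mathcal{G}_v-\psi\,\xi'<0$ of Lemma \ref{Lemma6-1}. Your explicit admissibility check of the frozen triple along $[v_0,\underline{V}_0]$ (trivial for $K=0$, via monotonicity of $-\coth\tau$ for $K=-1$) is precisely the point that makes the choice of reformulation \eqref{eq6-2} work, so no gaps.
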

\begin{proof}
The proof is similar to Lemma 3.40 in \cite{Sui2}.
\end{proof}

\begin{lemma} \label{Lemma6-3}
Let $v$ be a strictly locally convex solution of \eqref{eq6-9}. If $v \geq \underline{v}$ in $\Omega$, then
$v > \underline{v}$ in $\Omega$ and ${\bf n}(v - \underline{v}) > 0$ on $\partial\Omega$, where ${\bf n}$ is the interior unit normal to $\partial\Omega$.
\end{lemma}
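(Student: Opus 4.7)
Set $w = v - \underline{v}$, so that $w \geq 0$ in $\Omega$ and $w = 0$ on $\partial\Omega$ by hypothesis. My plan is to derive a linear elliptic differential inequality $Lw \leq 0$ for $w$ and then apply the strong maximum principle and the Hopf boundary point lemma.

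I first rule out $w \equiv 0$ by checking that $\underline v$ is never a solution of \eqref{eq6-9}. For $t = 1$ this is the standing hypothesis made just before \eqref{eq6-8}. For $t \in [0, 1)$, the subsolution inequality in \eqref{eq2-29} gives $\mathcal{G}(\nabla'^2\underline v, \nabla'\underline v, \underline v) = \underline\psi(z) \geq \psi(z,\underline v)$ while \eqref{eq6-20} gives $\underline\psi(z) > \epsilon\,\xi(\underline v)$; convex combining these yields $\underline\psi(z) > t\,\psi(z,\underline v) + (1-t)\epsilon\,\xi(\underline v)$, so $\underline v$ is a strict subsolution of \eqref{eq6-9}. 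In particular $v \not\equiv \underline v$.

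Next I derive the linear inequality. Split
\[
 \mathcal{G}(\nabla'^2 v,\nabla'v,v) - \mathcal{G}(\nabla'^2\underline v,\nabla'\underline v,\underline v)
 = \bigl[\mathcal{G}(\nabla'^2 v,\nabla'v,v) - \mathcal{G}(\nabla'^2\underline v,\nabla'v,v)\bigr]
 + \bigl[\mathcal{G}(\nabla'^2\underline v,\nabla'v,v) - \mathcal{G}(\nabla'^2\underline v,\nabla'\underline v,\underline v)\bigr].
\]
The concavity of $\mathcal{G}$ in the Hessian variable at the strictly locally convex $v$ bounds the first bracket below by $\mathcal{G}^{ij}(\nabla'^2 v, \nabla'v, v)\,w_{ij}$, while the mean value theorem in the $(p, v)$-arguments expresses the second bracket as $b^i(z)\,w_i + c(z)\,w$ with bounded coefficients. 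Since $v$ solves \eqref{eq6-9} and $\underline v$ is a subsolution of it, the left side is bounded above by $F(z, v) - F(z, \underline v) = F_v(z, \hat v)\,w$, where $F(z, v) := t\,\psi(z, v) + (1-t)\epsilon\,\xi(v)$ and $\hat v$ is intermediate. This produces
\[
 \mathcal{G}^{ij}(\nabla'^2 v,\nabla'v,v)\,w_{ij} + b^i(z)\,w_i + \bigl(c(z) - F_v(z,\hat v)\bigr)\,w \,\le\, 0 \quad\text{in}\quad \Omega,
\]
which is uniformly elliptic since $v$ is strictly locally convex.

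Because $w \ge 0$, shifting the zero-order coefficient by $C_0 := \sup_\Omega (c - F_v)^+ \ge 0$ yields an elliptic operator with nonpositive zero-order coefficient that still satisfies the $\le 0$ inequality, so the strong maximum principle forces $w \equiv 0$ whenever $w$ has an interior zero, contradicting the first step. Hence $v > \underline v$ in $\Omega$, and the Hopf boundary point lemma applied to the shifted operator yields $\mathbf n(v - \underline v) > 0$ on $\partial\Omega$. The main technical obstacle is to verify that the intermediate arguments of $\mathcal{G}^{ij}$, $\mathcal{G}^i$ and $\mathcal{G}_v$ that appear in the concavity and mean value steps lie in the admissible cone $\Gamma_n^+$ where these derivatives are defined and the leading matrix remains positive definite; using the explicit formula \eqref{eq6-1} for $a_{ij}$ (which is affine in $r$ for fixed $(p,v)$) together with the monotonicity $v \ge \underline v$, this is carried out along the lines used in Lemma \ref{Lemma6-2}.
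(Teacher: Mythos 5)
Your overall strategy --- linearize the difference, then apply the strong maximum principle and the Hopf lemma --- is the same one the paper uses (it invokes the maximum principle and Lemma H of \cite{GNN}, which are themselves proved by exactly such a linearization), and your bookkeeping of signs and your ruling out of $w\equiv 0$ are fine. However, the issue you defer to your last sentence is not a routine verification; it is the actual content of the proof, and your proposed fix does not close it. In your first bracket you invoke concavity of $\mathcal{G}$ in $r$ along the segment from $\nabla'^2\underline v$ to $\nabla'^2 v$ with $(p,v)$ frozen at $(\nabla' v, v)$; the endpoint matrix $A(\nabla'^2\underline v,\nabla' v,v)$, and hence the segment, need not be positive definite. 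Indeed, by \eqref{eq6-1} its positivity amounts to $\eta(v)\delta_{ij}+\eta'(v)\tilde\gamma^{ik}(\nabla' v)\,\nabla'_{kl}\underline v\,\tilde\gamma^{lj}(\nabla' v)>0$, which is a genuinely different condition from the strict local convexity of $\underline v$ (for $K=0$ the two conditions read $\nabla'^2\underline v+\nabla' v\otimes\nabla' v+I>0$ versus $\nabla'^2\underline v+\nabla'\underline v\otimes\nabla'\underline v+I>0$, and the first can fail where $|\nabla' v|<|\nabla'\underline v|$). The same problem afflicts the mean-value segment in $(p,v)$ of your second bracket. Affineness of $A$ in $r$ and the inequality $v\ge\underline v$ do not repair this, because the admissible set \eqref{eq6-6} is not convex in the $v$-variable ($\eta$ is convex, so convex combinations of $v$'s do not correspond to convex combinations of $u$'s). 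Outside the admissible set, $F$ and its derivatives are not even defined, so neither the concavity inequality nor the mean value theorem applies.

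The paper's proof is designed precisely to avoid this obstacle: it transforms back to the $u$-variable, where strict local convexity is the \emph{linear} condition $\nabla'^2u+u\,\delta_{ij}>0$ of \eqref{eq3-17}, so that the entire segment $su+(1-s)\underline u$ is admissible and the operator obtained by integrating $\frac{d}{ds}G[su+(1-s)\underline u]$ is elliptic; it then passes to local Euclidean coordinates with $\tilde u=\mu u$, which turns $\nabla'_{ij}u+u\sigma_{ij}$ into (essentially) a pure Euclidean Hessian, so that the maximum principle and Lemma H of \cite{GNN} apply verbatim. To repair your argument, replace the two-bracket decomposition by this integral linearization in the $u$-variable (or follow the paper's change of variables and cite \cite{GNN}); as written, the elliptic differential inequality for $w$ has not been established.
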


\begin{proof}
By \eqref{eq2-29} and \eqref{eq6-20} we know that $\underline{v}$ is a strict subsolution of \eqref{eq6-9} when $t \in [0, 1)$, while it is a subsolution but not a solution of \eqref{eq6-9} when $t = 1$. It is relatively easy to prove the conclusion when $t \in [0, 1)$, following the ideas in \cite{Su16}. For the case $t = 1$:
\begin{equation*}
\left\{ \begin{aligned} \mathcal{G} (\nabla'^2 v, \nabla' v, v) \, =  & \,   \psi(z, v) \quad & \mbox{in} \quad \Omega \\ v \,  = & \, \underline{v} \quad \quad & \mbox{on} \quad \partial\Omega \end{aligned} \right.
\end{equation*}
we will make use of the maximum principle which was originally discovered in \cite{Serrin}, while more precisely stated for our purposes in \cite{GNN} (see section 1.3, p. 212). Because the maximum principle and Hopf lemma there are designed for domains in Euclidean spaces, we need to rewrite the above equation in a local coordinate system of $\mathbb{S}^n$. For convenience, we first transform the above equation back under the transformation \eqref{eq6-2} into a form as \eqref{eq2-13}:
\begin{equation} \label{eq5-7}
\left\{ \begin{aligned}
G(\nabla'^2 u, \nabla' u, u) = & \,\psi(z, u)  \quad \quad  &\mbox{in} \quad & \Omega \\
u = & \,\underline{u}  \quad \quad & \mbox{on} \quad & \partial\Omega \end{aligned} \right.
\end{equation}
Recall that $G(\nabla'^2 u, \nabla' u, u) = F(A[u])$ where $A[u] = \{ \gamma^{ik} h_{kl} \gamma^{lj} \}$. Since at this time we do not use local orthonormal frame on $\mathbb{S}^n$, but rather a local coordinate system of $\mathbb{S}^n$, $\gamma^{ik}$ and $h_{kl}$ will appear differently (comparing with \eqref{eq3-10} and \eqref{eq3-13}). Also,
condition \eqref{eq2-29} (i.e. \eqref{eq1-4}) can be rewritten as
\begin{equation*}
\left\{ \begin{aligned}
G(\nabla'^2 \underline{u}, \nabla' \underline{u}, \,\underline{u}) \geq & \,\,\psi(z, \underline{u})  \quad  &\mbox{in} \quad \Omega\\
\underline{u} = & \,\,\varphi  \quad \quad &\mbox{on} \quad \partial \Omega \end{aligned} \right.
\end{equation*}
Note that $\underline{u}$ is not a solution of \eqref{eq5-7}.

(i) We first show that if a strictly locally convex solution $u$ of \eqref{eq5-7} satisfies $u \geq \underline{u}$ in $\Omega$, then $u > \underline{u}$ in $\Omega$. Let $N \notin \Omega$ be the north pole of $\mathbb{S}^n$. Take the radial projection of $\mathbb{S}^n \setminus \{N\}$ onto $\mathbb{R}^n \times \{ -1 \} \subset \mathbb{R}^{n+1}$ and let $\tilde{\Omega}$ be the image of $\Omega$. We thus have a coordinate system $(x_1, \ldots, x_n)$ on $\mathbb{R}^n \times \{ -1 \} \cong \mathbb{R}^n$.  The metric on $\mathbb{S}^n$, its inverse, and the Christoffel symbols are given respectively by
\[ \sigma_{ij} = \frac{16}{\mu^2} \,\delta_{ij}, \quad \mu = 4 + \sum x_i^2, \quad \sigma^{ij} = \frac{\mu^2}{16}\, \delta_{ij} \]
\[ {\Gamma}_{ij}^k = - \frac{2}{\mu} \,(\delta_{ik} x_j + \delta_{jk} x_i - \delta_{ij} x_k) \]
Consequently, the metric on $\Sigma$, its inverse and the second fundamental form on $\Sigma$ are given respectively by (c.f. \cite{SX15})
\begin{equation*}
g_{ij} = \phi^2 \,\sigma_{ij} + \zeta'^2(u)\, u_i u_j
\end{equation*}
\begin{equation*}
g^{ij} = \frac{1}{\phi^2} \Big( \sigma^{ij} - \frac{\zeta'^2(u) u^i u^j}{\phi^2 + \zeta'^2(u) |\nabla' u|^2} \Big), \quad u^i = \sigma^{ik} u_k
\end{equation*}
\begin{equation*}
h_{ij} =  \frac{- \zeta'(u) \phi}{\sqrt{\phi^2 + \zeta'^2(u) |\nabla' u|^2}} ( \nabla'_{ij} u + u \,\sigma_{ij} )
\end{equation*}
The entries of the symmetric matrices $\{\gamma_{ik}\}$ and $\{\gamma^{ik}\}$ depend only on $x_1, \ldots, x_n$, $u$ and the first derivatives of $u$.

Now, setting $\tilde{u} = \mu u$ and by straightforward calculation we have
\begin{equation} \label{eq5-10}
\nabla'_{ij} u + u \,\sigma_{ij} = \frac{1}{\mu} \tilde{u}_{ij} + \frac{2 \delta_{ij}}{\mu^2} ( \tilde{u} - \sum\limits_k x_k \tilde{u}_k )
\end{equation}
and \eqref{eq5-7} can be transformed into the following form:
\begin{equation*}
\left\{ \begin{aligned}
\tilde{G}(D^2\tilde{u}, D \tilde{u}, \tilde{u}, x_1, \ldots, x_n)  = F\Big( A\Big[ \frac{\tilde{u}}{\mu} \Big]\Big) = & \,\tilde{\psi}(x_1, \ldots, x_n, \tilde{u})  \quad \quad  &\mbox{in} \quad & \tilde{\Omega} \\
\tilde{u} = & \,\mu \,\underline{u}  \quad \quad & \mbox{on} \quad & \partial\tilde{\Omega} \end{aligned} \right.
\end{equation*}
where $\tilde{u}_i = \frac{\partial\tilde{u}}{\partial x_i}$, $D \tilde{u} = ( \tilde{u}_1, \ldots, \tilde{u}_n )$, $\tilde{u}_{ij} = \frac{\partial^2 \tilde{u}}{\partial x_i \partial x_j}$ and $D^2 \tilde{u} = \{ \tilde{u}_{ij} \}$.

In view of \eqref{eq5-10} and \eqref{eq2-14} we know that
\[ \frac{\partial \tilde{G}}{\partial \tilde{u}_{ij}} = \frac{\partial F}{\partial a_{kl}} \,\frac{\partial a_{kl}}{\partial \tilde{u}_{ij}} = \frac{1}{\mu} \, \frac{\partial G}{\partial u_{ij}}  \]
Also, the function $\tilde{\underline{u}} = \mu \underline{u}$ satisfies
\begin{equation*}
\left\{ \begin{aligned}
\tilde{G}(D^2 \tilde{\underline{u}}, D \tilde{\underline{u}}, \tilde{\underline{u}}, x_1, \ldots, x_n) \geq & \,\tilde{\psi}(x_1, \ldots, x_n, \tilde{\underline{u}})  \quad  \mbox{in} \quad \tilde{\Omega}\\
\tilde{\underline{u}} = & \,\mu\underline{u}  \quad \quad \mbox{on} \quad \partial \tilde{\Omega} \end{aligned} \right.
\end{equation*}
Hence we can apply the Maximum Principle (see p. 212 of \cite{GNN}) to conclude that $\tilde{u} > \tilde{\underline{u}}$ in $\tilde{\Omega}$, which immediately yields $u > \underline{u}$ in $\Omega$.

(ii) To prove ${\bf n}(u - \underline{u}) > 0$ on $\partial\Omega$, we pick an arbitrary point $z_0 \in \partial\Omega$ and assume $z_0$ to be the north pole of $\mathbb{S}^n \subset \mathbb{R}^{n + 1}$. We introduce a local coordinate system about $z_0$ by taking the radial projection of the upper hemisphere onto the tangent hyperplane of $\mathbb{S}^n$ at $z_0$ and identifying this hyperplane to $\mathbb{R}^n$. Denote the coordinates by $(y_1, \ldots, y_n)$  and assume that the positive $y_n$-axis is the interior normal direction to $\partial\Omega \subset \mathbb{S}^n$ at $z_0$. In this coordinate system, the metric on $\mathbb{S}^n$, its inverse, and the Christoffel symbols are given respectively by (see \cite{Oliker, GS93})
\[ \sigma_{ij} = \frac{1}{\mu^2} \Big( \delta_{ij} - \frac{y_i y_j}{\mu^2} \Big), \quad \mu = \sqrt{1 + \sum y_i^2}  \]
\[ \sigma^{ij} = \mu^2 ( \delta_{ij} + y_i y_j ) \]
\[ {\Gamma}_{ij}^k = - \frac{\delta_{ik} y_j + \delta_{jk} y_i}{\mu^2} \]
The metric $g_{ij}$, its inverse $g^{ij}$ and the second fundamental form $h_{ij}$ on $\Sigma$ have the form as above.
The entries of the symmetric matrices $\{\gamma_{ik}\}$ and $\{\gamma^{ik}\}$  depend only on $y_1, \ldots, y_n$, $u$ and the first derivatives of $u$.

Now set $\tilde{u} = \mu u$. By straightforward calculation we have
\begin{equation} \label{eq5-8}
\nabla'_{ij} u + u \,\sigma_{ij} = \mu^{- 1} \tilde{u}_{ij}
\end{equation}
and equation \eqref{eq5-7} can be transformed into an equation defined in an open neighborhood of $0$ on $\mathbb{R}^n$, which is the radial projection of a neighborhood of $z_0$ on $\mathbb{S}^n$:
\begin{equation*}
\tilde{G}(D^2\tilde{u}, D \tilde{u}, \tilde{u}, y_1, \ldots, y_n)  = F\Big( A\Big[ \frac{\tilde{u}}{\mu} \Big]\Big) =  \,\tilde{\psi}(y_1, \ldots, y_n, \tilde{u})
\end{equation*}
where $\tilde{u}_i = \frac{\partial\tilde{u}}{\partial y_i}$, $D \tilde{u} = ( \tilde{u}_1, \ldots, \tilde{u}_n )$, $\tilde{u}_{ij} = \frac{\partial^2 \tilde{u}}{\partial y_i \partial y_j}$ and $D^2 \tilde{u} = \{ \tilde{u}_{ij} \}$.
In view of \eqref{eq5-8} and \eqref{eq2-14} we know that
\[ \frac{\partial \tilde{G}}{\partial \tilde{u}_{ij}} = \frac{\partial F}{\partial a_{kl}} \,\frac{\partial a_{kl}}{\partial \tilde{u}_{ij}} = \frac{1}{\mu} \, \frac{\partial G}{\partial u_{ij}}  \]
Applying Lemma H (see p. 212 of \cite{GNN}) we find that $(\tilde{u} - \tilde{\underline{u}})_n (0) > 0$ and equivalently ${\bf n}(u - \underline{u}) (z_0) > 0$.
\end{proof}

\begin{thm} \label{Theorem6-1}
For any $t \in [0, 1]$, the Dirichlet problem \eqref{eq6-8} has a unique strictly locally convex solution.
\end{thm}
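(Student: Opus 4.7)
The plan is to run a standard continuity method in $t \in [0,1]$, relying on Lemma \ref{Lemma6-1} for openness and the a priori estimates of Theorem \ref{Theorem1-1} for closedness. Set
\[ T = \{ t \in [0,1] : \eqref{eq6-8} \text{ admits a strictly locally convex solution} \}. \]
Uniqueness for each fixed $t$ is already supplied by Lemma \ref{Lemma6-2}, so I only have to prove $T = [0,1]$. First observe $0 \in T$: at $t=0$ the equation reads $\mathcal{G}(\nabla'^2 v, \nabla' v, v) = (\underline{\psi}(z)/\xi(\underline{v}))\,\xi(v)$, and $v = \underline{v}$ satisfies both this and the boundary condition by the very definition of $\underline{\psi}$.

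For openness, at any strictly locally convex solution $v$ of \eqref{eq6-8} write the right-hand side as $\psi_t(z)\,\xi(v)$ with $\psi_t(z) := t\epsilon + (1-t)\underline{\psi}(z)/\xi(\underline{v}) > 0$. The linearization is
\[ L w \;=\; \mathcal{G}^{ij}(\nabla'^2 v, \nabla' v, v)\, w_{ij} \;+\; \mathcal{G}^{i}(\nabla'^2 v, \nabla' v, v)\, w_i \;+\; \bigl(\mathcal{G}_v - \psi_t \xi'(v)\bigr)\, w. \]
Ellipticity of $L$ is immediate from strict local convexity via \eqref{eq6-1}, and the key point is that the zeroth-order coefficient $\mathcal{G}_v - \psi_t \xi'(v)$ is \emph{strictly negative} by Lemma \ref{Lemma6-1} applied with $\psi = \psi_t$. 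Hence $L: C^{2,\alpha}_0(\overline{\Omega}) \to C^{\alpha}(\overline{\Omega})$ is an isomorphism by the maximum principle and Fredholm alternative, and the implicit function theorem produces a strictly locally convex solution for every $t'$ in a neighborhood of $t$.

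For closedness, let $t_k \to t_0$ and let $v_k$ be strictly locally convex solutions of \eqref{eq6-8} at $t_k$. Since $\underline{v}$ is a subsolution of \eqref{eq6-8} for every $t$ (because $\psi_t(z)\,\xi(\underline{v}) = t\epsilon\xi(\underline{v}) + (1-t)\underline{\psi}(z) \leq \underline{\psi}(z)$ by \eqref{eq6-20}), Lemma \ref{Lemma6-2} gives $v_k \geq \underline{v}$, equivalently $\rho_k \leq \overline{\rho}$ under the reverse of transformations \eqref{eq3-14} and \eqref{eq6-2}. Lemma \ref{Lemma5-1} then yields uniform $C^0$ and $C^1$ bounds, Theorem \ref{Theorem1-1} promotes these to uniform $C^2$ bounds, and by \eqref{eq1-7} we obtain a uniform positive lower bound on the principal curvatures; Evans--Krylov and Schauder theory lift this to uniform $C^{k,\alpha}$ bounds for every $k$. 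Passing to a subsequence, $v_k \to v_0$ smoothly, $v_0$ remains strictly locally convex, and $v_0$ solves \eqref{eq6-8} at $t_0$; thus $t_0 \in T$.

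The delicate step is openness: without Lemma \ref{Lemma6-1} the linearization would in general have nontrivial kernel (this is exactly the well-known obstruction for Monge--Ampère-type equations on $\mathbb{S}^n$ that the introduction discusses), and one would be forced into a degree-theoretic argument instead of the implicit function theorem. The role of the auxiliary deformation \eqref{eq6-8} — in particular, building the RHS as $\psi_t(z)\,\xi(v)$ rather than the original $\psi(z,v)$ — is precisely to bring it into the form where Lemma \ref{Lemma6-1} applies and makes the continuity method work directly.
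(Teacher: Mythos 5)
Your proof is correct and is exactly the argument the paper intends: the paper itself only cites \cite{Sui1} for this step, but the whole setup (Lemma \ref{Lemma6-1} making the zeroth-order coefficient of the linearization strictly negative, Lemma \ref{Lemma6-2} for uniqueness and the comparison $v \geq \underline{v}$, and the choice \eqref{eq6-20} of $\epsilon$ making $\underline{v}$ a subsolution uniformly in $t$) is designed precisely for the continuity method you run, with openness from the invertible linearization and closedness from the a priori estimates of Sections 3--4. No discrepancies to report.
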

\begin{proof}
The proof is the same as in \cite{Sui1}.
\end{proof}

\begin{thm} \label{Theorem6-2}
For any $t \in [0, 1]$, the Dirichlet problem \eqref{eq6-9} has a strictly locally convex solution. In particular, \eqref{eq2-26}--\eqref{eq2-25} has a strictly locally convex solution.
\end{thm}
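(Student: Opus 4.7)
The plan is to deduce Theorem 6.2 from the degree theory argument of Y.-Y. Li, using equation \eqref{eq6-8} (via Theorem \ref{Theorem6-1}) to supply the starting point at $t=0$ of the homotopy \eqref{eq6-9}, and using Lemma \ref{Lemma6-3} and the $C^2$ estimates of Theorems \ref{Theorem1-1} and \ref{Theorem 1} to keep the path compact inside the open cone of admissible functions. Specifically, I would set up the Banach manifold
\[
\mathcal{O} \,=\, \bigl\{ v \in C^{4,\alpha}(\overline{\Omega}) : v = \underline{v} \text{ on } \partial\Omega,\ v > \underline{v} \text{ in } \Omega,\ v \text{ strictly locally convex},\ \|v\|_{C^{4,\alpha}} < M \bigr\},
\]
for a constant $M$ to be chosen, and define the nonlinear operator $M_t : \mathcal{O} \to C^{2,\alpha}(\overline{\Omega})$ by
\[
M_t(v) \,=\, \mathcal{G}(\nabla'^2 v, \nabla' v, v) \,-\, t\,\psi(z,v) \,-\, (1-t)\,\epsilon\,\xi(v).
\]
A zero of $M_t$ is exactly a strictly locally convex solution of \eqref{eq6-9}$_t$.

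Next I would establish uniform a priori estimates along the homotopy. For any solution $v \in \mathcal{O}$ of $M_t(v)=0$, Lemma \ref{Lemma6-3} gives the strict inequality $v > \underline{v}$ in $\Omega$ and ${\bf n}(v-\underline{v}) > 0$ on $\partial\Omega$, so solutions cannot touch the part of $\partial\mathcal{O}$ where $v = \underline{v}$ or $v < \underline{v}$. Translating $v$ back to $\rho = \zeta(\eta(v))$, the inequality $v \geq \underline{v}$ becomes $\rho \leq \overline{\rho}$, so Lemma \ref{Lemma5-1}, Theorem \ref{Theorem1-1} and Theorem \ref{Theorem 1} apply uniformly in $t$ (the right-hand side $t\psi + (1-t)\epsilon\xi(v)$ remains a smooth positive function with controlled $C^2$ norm and a uniform positive infimum, and the subsolution condition \eqref{eq6-20} guarantees $\underline{v}$ is a subsolution for every $t$). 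Evans--Krylov and Schauder theory then upgrade these to a uniform $C^{4,\alpha}$ bound, so for $M$ large the solution set stays in the interior of $\mathcal{O}$ and the Leray--Schauder degree $\deg(M_t, \mathcal{O}, 0)$ is well defined and independent of $t$.

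At $t=0$ the equation \eqref{eq6-9}$_0$ coincides with \eqref{eq6-8}$_1$, which by Theorem \ref{Theorem6-1} has a unique strictly locally convex solution $v_0 \in \mathcal{O}$. I would compute the degree by linearizing at $v_0$. The linearized operator is
\[
\mathcal{L}\varphi \,=\, \mathcal{G}^{ij}(v_0)\,\nabla'_{ij}\varphi \,+\, \mathcal{G}^{i}(v_0)\,\nabla'_{i}\varphi \,+\, \bigl(\mathcal{G}_v(v_0) - \epsilon\,\xi'(v_0)\bigr)\,\varphi.
\]
Ellipticity comes from strict local convexity, and Lemma \ref{Lemma6-1} gives $\mathcal{G}_v(v_0) - \epsilon\,\xi'(v_0) < 0$ at the solution; by the maximum principle $\mathcal{L}$ is invertible from $\{\varphi \in C^{4,\alpha} : \varphi|_{\partial\Omega}=0\}$ to $C^{2,\alpha}$ with no nontrivial kernel. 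Hence $v_0$ is a nondegenerate zero and $\deg(M_0,\mathcal{O},0) = \pm 1$.

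Homotopy invariance then gives $\deg(M_1,\mathcal{O},0) = \pm 1 \neq 0$, so \eqref{eq6-9}$_1$, which is exactly \eqref{eq2-26}--\eqref{eq2-25}, admits a strictly locally convex solution $v \in \mathcal{O}$, and for intermediate $t$ the same argument provides existence. The main technical obstacle will be verifying that the a priori $C^2$ estimates really do survive the replacement of $\psi(z,v)$ by the convex combination $t\psi + (1-t)\epsilon\xi(v)$ for all $t \in [0,1]$; this reduces to checking that the assumptions \eqref{eq1-10}--\eqref{eq1-11} and the subsolution inequality remain valid with constants independent of $t$, which follows from \eqref{eq6-20} and the smoothness of $\xi$ on the range of $v$ fixed by Lemma \ref{Lemma5-1}.
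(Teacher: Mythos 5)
Your proposal is correct and follows essentially the same route the paper takes (the paper defers to \cite{Sui1}, but the identical template is carried out explicitly in its proof of Theorem \ref{Theorem7-1}): an open set of admissible functions pinned down by the subsolution via Lemma \ref{Lemma6-3}, uniform $C^{4,\alpha}$ bounds from the $C^2$ estimates plus Evans--Krylov and Schauder, and a degree computation at $t=0$ reduced by Theorem \ref{Theorem6-1} to the linearization at the unique solution, whose invertibility comes from Lemma \ref{Lemma6-1}. The only cosmetic difference is that the paper phrases the homotopy in the shifted variable $w = v - \underline{v}$ on $C_0^{4,\alpha}(\overline{\Omega})$ and uses Y.~Y.~Li's degree rather than Leray--Schauder, which does not affect the argument.
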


\begin{proof}
The proof is the same with \cite{Sui1} except slight modifications.
\end{proof}

\vspace{5mm}

\section{Existence in \, $\mathbb{S}^{n + 1}_{+}$}

\vspace{5mm}

For any $\epsilon > 0$,  we want to prove the existence of a strictly locally convex solution to the Dirichlet problem when $K = 1$,
\begin{equation} \label{eq7-2}
\left\{
\begin{aligned}
G[u] := G( \nabla'^2 u, \nabla' u, u ) \,\,= & \,\, \psi(z, u) - \epsilon  \quad\quad &\mbox{in} \quad \Omega \\
u \,\, = & \,\, \varphi   &\mbox{on} \quad \partial\Omega
\end{aligned} \right.
\end{equation}
Then a strictly locally convex solution to  \eqref{eq2-13}--\eqref{eq2-12} follows from the uniform ($\epsilon$-independent) $C^2$ estimates (established in Section 3 and 4) and approximation.

As we have seen from last section, there does not exist an auxiliary equation in $\mathbb{S}^{n+1}_+$ with an invertible linearized operator. Hence we want to build a continuity process starting from an auxiliary equation in $\mathbb{R}^{n+1}$.

For this, we first consider a continuous version of \eqref{eq3-18}. For $t \in [0, 1]$, denote
\[a^t_{ij} = \frac{- (\zeta^t)' \phi^t}{\sqrt{(\phi^t)^2 + (\zeta^t)'^2 |\nabla' u|^2}} (\gamma^t)^{ik}\, ( \nabla'_{kl} u + u \,\delta_{kl} )\,(\gamma^t)^{lj}\]
where
\[(\gamma^t)^{ik} =   \,\frac{1}{\phi^t}\, \Big( \delta_{ik} - \frac{ (\zeta^t)'^2(u) u_i u_k }{\sqrt{(\phi^t)^2 + (\zeta^t)'^2(u) |\nabla' u|^2 } ( \phi^t + \sqrt{(\phi^t)^2 + (\zeta^t)'^2(u) |\nabla' u|^2 } )}\Big)\]
and
\[ \phi^t(\rho) = \frac{\sin (t \rho)}{t}, \quad \quad  \zeta^t(u) = \frac{1}{t} \, \mbox{arccot} \frac{u}{t} \]
Note that these geometric quantities on $\Sigma$ correspond to the background metric
\[ \bar{g}^t = d \rho^2 + (\phi^t)^2(\rho)\, \sigma \]
which provides a deformation process from $\mathbb{R}^{n+1}$ ($t = 0$) to $\mathbb{S}^{n+1}_+$ ($t = 1$).
Define
\[ G^t [ u ] \, = \, G^t( \nabla'^2 u, \nabla' u, u ) \, = \,F( a^t_{ij} ) \]
Hence $G^1 = G$.
The following property is true by direct calculation.
\begin{prop}
$G^t [u]$ is increasing with respect to $t$.
\end{prop}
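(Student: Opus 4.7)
The plan is to bring $a^t$ into a $t$-independent congruence form $a^t=SMS$ where $S$ is diagonal with $t$-increasing entries and $M$ is positive definite and independent of $t$, and then argue by eigenvalue perturbation that each eigenvalue of $SMS$ is monotone in each diagonal entry of $S$. Since $f_i>0$ by \eqref{eq1-5}, this transfers to monotonicity of $G^t[u]=f(\lambda(a^t))$.

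The first step is the algebraic simplification of $a^t$. Substituting the explicit $\phi^t(\rho)=\sin(t\rho)/t$ and $\zeta^t(u)=(1/t)\operatorname{arccot}(u/t)$ into the definition of $a^t$, and using the identities $\sin(t\,\zeta^t(u))=t/\sqrt{u^2+t^2}$ and $(\zeta^t)'(u)=-1/(u^2+t^2)$, a direct calculation with $A:=u^2+t^2$, $B:=|\nabla' u|^2$, $W:=\sqrt{A+B}$, $\alpha:=\sqrt{A}/W$ collapses the scalar prefactor to $1/(\sqrt{A}\,W)$ and produces $(\gamma^t)^{ik}=\sqrt{A}\,\tilde\gamma^{ik}$ with $\tilde\gamma^{ik}=\delta_{ik}-(1-\alpha)u_iu_k/B$. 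These combine into
\[
a^t_{ij}=\alpha\,\tilde\gamma^{ik}\bigl(\nabla'_{kl}u+u\,\delta_{kl}\bigr)\tilde\gamma^{lj},
\]
so all $t$-dependence sits in $\alpha$, and $d\alpha/dt=tB/(\sqrt{A}\,W^3)\geq 0$. At a point where $\nabla' u$ vanishes, $\alpha=1$ and $a^t$ is $t$-independent, so there is nothing to prove. At points where $\nabla' u\neq 0$, I work in a local orthonormal frame with $e_1\parallel\nabla' u$; then $\tilde\gamma=\operatorname{diag}(\alpha,1,\ldots,1)$ and
\[
a^t=SMS,\qquad S:=\operatorname{diag}(\alpha^{3/2},\sqrt\alpha,\ldots,\sqrt\alpha),\qquad M:=\nabla'^2 u+uI,
\]
where $M$ is positive definite by the strict local convexity condition \eqref{eq3-17} and every diagonal entry $s_k$ of $S$ is a non-decreasing function of $t$.

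For the eigenvalue perturbation, let $\lambda_i$ be an eigenvalue of $a^t=SMS$ with unit eigenvector $v_i$. From $SMSv_i=\lambda_iv_i$ one obtains $MSv_i=\lambda_iS^{-1}v_i$, and the Hellmann-Feynman formula yields
\[
\frac{\partial\lambda_i}{\partial s_k}=v_i^{\,T}\,\partial_{s_k}(SMS)\,v_i=2(v_i)_k(MSv_i)_k=\frac{2\lambda_i(v_i)_k^2}{s_k}\geq 0
\]
(with the usual approximation argument covering eigenvalue coincidences). Combining with $ds_k/dt\geq 0$ and $f_i>0$ gives
\[
\frac{d}{dt}G^t[u]=\sum_{i,k} f_i\,\frac{\partial\lambda_i}{\partial s_k}\,\frac{ds_k}{dt}\geq 0,
\]
which proves the proposition. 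The only real obstacle is the algebraic simplification in the first step: several $\sqrt{A}$ and $W$ factors must conspire to cancel before the congruence form $a^t=SMS$ emerges; once it is in hand, the perturbation argument and sign tracking are standard.
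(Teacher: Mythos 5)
Your proposal is correct, and the algebra in the first step checks out: with $A=u^2+t^2$, $W=\sqrt{A+|\nabla'u|^2}$ one indeed has $\phi^t(\zeta^t(u))=1/\sqrt{A}$, $(\zeta^t)'=-1/A$, the scalar prefactor collapses to $1/(\sqrt{A}\,W)$, and $(\gamma^t)^{ik}=\sqrt{A}\,\tilde\gamma^{ik}$, giving exactly the reduced form $a^t=\alpha\,\tilde\gamma(\nabla'^2u+uI)\tilde\gamma$ that the paper also records as its starting point. From there, however, your route diverges from the paper's. The paper differentiates $G^t[u]$ in $t$ head-on, using $\partial_t\tilde\gamma^{ik}=-\tilde\gamma^{ip}\partial_t\tilde\gamma_{pq}\tilde\gamma^{qk}$ to arrive at
\[
\partial_t G^t[u]=\frac{t}{\sqrt{A}\,W^3}\,F^{ij}\bigl(|\nabla'u|^2\delta_{iq}+2u_iu_q\bigr)\tilde\gamma^{qk}\bigl(\nabla'_{kl}u+u\delta_{kl}\bigr)\tilde\gamma^{lj}\geq 0,
\]
where the sign rests on the (implicit) fact that $\{F^{ij}\}$ commutes with $a^t$, so that $\{F^{ij}\}\,a^t$ is symmetric positive semi-definite and its trace against the positive semi-definite matrix $|\nabla'u|^2 I+2\,\nabla'u\otimes\nabla'u$ is non-negative. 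You instead diagonalize $\tilde\gamma$ in a frame aligned with $\nabla'u$ (legitimate, since the eigenvalues of $a^t$ are frame-invariant and the frame, $M$, $u$, $\nabla'u$ are all $t$-independent), package all the $t$-dependence into the diagonal congruence factor $S$ with non-decreasing entries, and invoke Hellmann--Feynman to get $\partial\lambda_i/\partial s_k=2\lambda_i(v_i)_k^2/s_k\geq 0$. This buys you a strictly stronger conclusion -- each principal curvature $\kappa_i$ is individually non-decreasing in $t$, not just $f(\kappa)$ -- and it trades the commutation fact for an eigenvalue-monotonicity argument (which could even be made derivative-free via $\lambda_i(SMS)=\lambda_i(M^{1/2}S^2M^{1/2})$ and Weyl monotonicity, sidestepping the eigenvalue-coincidence caveat you flag). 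Both arguments are complete; yours is more structural, the paper's more computational.
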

\begin{proof}
\[ a_{ij}^t = \,\Big(1 + \frac{|\nabla' u|^2}{u^2 + t^2} \Big)^{- \frac{1}{2}}\, \,\tilde{\gamma}^{ik}\,\big( \nabla'_{kl} u \,+\, u \,\delta_{kl} \big)\,\tilde{\gamma}^{lj} \]
where
\[ \tilde{\gamma}^{ik} \,=\, \delta_{ik} - \frac{u_i \,u_k}{\sqrt{u^2 + t^2 + |\nabla' u|^2} \,\big( \sqrt{u^2 + t^2} + \sqrt{u^2 + t^2 + |\nabla' u|^2} \big)} \]

\[ \begin{aligned}
\,\frac{\partial}{\partial t} \,G^t[ u ]  = \,& \Big( 1 + \frac{|\nabla' u|^2}{u^2 + t^2} \Big)^{- 3/2}F^{ij}  \cdot \\ &  \Big(  \frac{t |\nabla' u|^2}{(u^2 + t^2)^2}  \tilde{\gamma}^{ik} +  2 \Big( 1 + \frac{|\nabla' u|^2}{u^2 + t^2} \Big) \frac{\partial \tilde{\gamma}^{ik}}{\partial t} \Big) \big( \nabla'_{kl} u + u \delta_{kl} \big) \tilde{\gamma}^{lj}  \end{aligned} \]
The inverse $(\tilde{\gamma}_{ik})$ of $(\tilde{\gamma}^{ik})$ is given by
\[ \tilde{\gamma}_{ik} \,=\, \delta_{ik} + \frac{u_i \,u_k}{ u^2 + t^2 + \sqrt{ (u^2 + t^2)^2 + (u^2 + t^2) |\nabla' u|^2} }  \]
Since
\[  \frac{\partial \tilde{\gamma}^{ik}}{\partial t} \,=\, -  \tilde{\gamma}^{ip}\,\frac{\partial \tilde{\gamma}_{pq}}{\partial t} \, \tilde{\gamma}^{qk} \]
\[ \frac{\partial \tilde{\gamma}_{pq}}{\partial t} = \, - \frac{u_p\, u_q \,t}{(u^2 + t^2)^{3/2} \sqrt{u^2 + t^2 + |\nabla' u|^2}}\]
and
\[ \tilde{\gamma}^{ik} \,u_k \,=\, \frac{\sqrt{u^2 + t^2}}{\sqrt{u^2 + t^2 + |\nabla' u|^2}} \, u_i \]
therefore,
\[ \begin{aligned} \,\frac{\partial}{\partial t}\, G^t[ u ] = & \frac{t}{\sqrt{u^2 + t^2} \big( u^2 + t^2 + |\nabla' u|^2 \big)^{3/2} }    F^{ij} \cdot \\ & \Big( |\nabla' u|^2 \delta_{iq} + 2 u_i u_q \Big) \tilde{\gamma}^{qk}\, \big( \nabla'_{kl} u + u \delta_{kl} \big)\, \tilde{\gamma}^{lj} \geq 0
\end{aligned} \]
\end{proof}

Recall that we have assumed a strictly locally convex subsolution.
\[\left\{
\begin{aligned}
G[\underline{u}] \,\,\geq & \,\,\psi(z, \underline{u})  \quad\quad &\mbox{in} \quad \Omega \\
\underline{u} \,\, = & \,\, \varphi   &\mbox{on} \quad \partial\Omega
\end{aligned} \right.
\]
Choose $\epsilon$ small such that
\[ \epsilon < \min\{   \min\limits_{\overline{\Omega}} G^0[\underline{u}], \,\, \min\limits_{\overline{\Omega}}\psi(z, \underline{u}) \} \]
By continuity, for $t \in [1 - \delta_1, \,1]$ where $\delta_1$ is a sufficiently small positive constant depending on $\epsilon$, we have
\begin{equation} \label{eq7-3}
\left\{
\begin{aligned}
G^t[\underline{u}] \,\,> & \,\, \psi(z, \underline{u}) - \,\frac{\epsilon}{2}  \quad\quad &\mbox{in} \quad \Omega \\
\underline{u} \,\, = & \,\, \varphi   &\mbox{on} \quad \partial\Omega
\end{aligned} \right.
\end{equation}

Denote $\mathcal{G}^t [\, v\, ] \,:= \mathcal{G}^t( \nabla'^2 v, \nabla' v, v )\, =: \, G^t [\, e^v \,]$. Consider the continuity process,
\begin{equation} \label{eq7-1}
\left\{ \begin{aligned}
\mathcal{G}^t [\, v\, ] \,  =  & \, \, \big( 1 - T(t) \big) \,\delta_2 \, e^{2 v} \, + \, T(t) \,\big( \psi(z, e^v) - \epsilon \big) \quad\quad & \mbox{in} \quad \Omega \\ v \,  = & \, \,\ln \varphi \quad \quad & \mbox{on} \quad \partial\Omega \end{aligned} \right.
\end{equation}
where $\delta_2$ is a small positive constant such that
\[ \delta_2  \,\max\limits_{\overline{\Omega}} \underline{u}^2 \,<\, \frac{\epsilon}{2} \]
and $T(t)$ is a smooth strictly increasing function with $T(0) = 0$,  $T(1) = 1$ satisfying
\[  \min\limits_{\overline{\Omega}} \,G^0 [\underline{u}] \, > \,2 \,\,T(1 - \delta_1)\, \,\max\limits_{ \overline{\Omega}} \psi(z, \underline{u})  \]

\begin{prop}
$\underline{v} = \ln \underline{u}$ is a strict subsolution of \eqref{eq7-1} for any $t\in [0, 1]$.
\end{prop}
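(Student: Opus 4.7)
The goal is to prove that for all $t\in[0,1]$,
\[
\mathcal{G}^t[\underline{v}] \,>\, (1-T(t))\,\delta_2\, e^{2\underline{v}} + T(t)\bigl(\psi(z,e^{\underline{v}})-\epsilon\bigr)
\]
together with $\underline{v}=\ln\varphi$ on $\partial\Omega$ and the strict local convexity of $\underline{v}$. The boundary condition is immediate since $\underline{u}=\varphi$ implies $\underline{v}=\ln\varphi$. Strict local convexity in the $v$-variable follows from that in the $u$-variable via \eqref{eq6-6}, which is purely a pointwise algebraic condition independent of $t$ (only $\zeta^t$ changes, and the Hessian-plus-$u I$ positivity passes through the transformation $u=e^v$). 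So the real content is the differential inequality. Substituting $e^{\underline{v}}=\underline{u}$ and using the defining identity $\mathcal{G}^t[\underline{v}]=G^t[\underline{u}]$, the problem reduces to showing
\[
G^t[\underline{u}] \,>\, (1-T(t))\,\delta_2\,\underline{u}^2 + T(t)\bigl(\psi(z,\underline{u})-\epsilon\bigr)\qquad\text{for all } t\in[0,1].
\]

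My plan is to split the interval at $t=1-\delta_1$ and use two different estimates on the two pieces. For $t\in[0,1-\delta_1]$, I invoke the monotonicity proposition just proved to get $G^t[\underline{u}]\ge G^0[\underline{u}]\ge \min_{\overline{\Omega}}G^0[\underline{u}]$, while the right-hand side is bounded above by $\delta_2\underline{u}^2 + T(1-\delta_1)\max_{\overline{\Omega}}\psi(z,\underline{u})$. By the choice $\delta_2\max\underline{u}^2<\epsilon/2$ and the defining property $\min G^0[\underline{u}] > 2T(1-\delta_1)\max\psi(z,\underline{u})$, this upper bound is strictly smaller than $\frac{\epsilon}{2}+\frac{1}{2}\min G^0[\underline{u}]$, which is strictly less than $\min G^0[\underline{u}]$ because $\epsilon<\min G^0[\underline{u}]$ by the very first choice of $\epsilon$.

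For $t\in[1-\delta_1,1]$, the key input is \eqref{eq7-3}, which gives directly $G^t[\underline{u}]>\psi(z,\underline{u})-\epsilon/2$. I rewrite the right-hand side as
\[
\psi(z,\underline{u})-(1-T(t))\bigl(\psi(z,\underline{u})-\delta_2\underline{u}^2\bigr)-T(t)\epsilon,
\]
so it suffices to verify that $(1-T(t))(\psi(z,\underline{u})-\delta_2\underline{u}^2)+T(t)\epsilon\ge\epsilon/2$. Since $\epsilon<\min\psi(z,\underline{u})$ and $\delta_2\underline{u}^2<\epsilon/2$, we have $\psi(z,\underline{u})-\delta_2\underline{u}^2>\epsilon/2$, so the left side dominates $(1-T(t))\epsilon/2+T(t)\epsilon=\epsilon/2+T(t)\epsilon/2\ge\epsilon/2$, and combining this with \eqref{eq7-3} yields the desired strict inequality.

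The only non-routine point is making sure the inequalities are \emph{strict} and consistent across the junction $t=1-\delta_1$, which is why the choices of $\epsilon$, $\delta_1$, $\delta_2$, and $T(1-\delta_1)$ were arranged with a factor of $2$ of slack in each of \eqref{eq7-3} and the monotonicity condition on $T$. I don't foresee any serious obstacle: the proof is essentially a bookkeeping argument, with the monotonicity proposition doing the substantive geometric work on the low-$t$ side and \eqref{eq7-3} doing it on the high-$t$ side.
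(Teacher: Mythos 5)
Your proposal is correct and follows essentially the same route as the paper: split at $t=1-\delta_1$, use the monotonicity of $G^t[\underline{u}]$ in $t$ together with the condition $\min G^0[\underline{u}]>2\,T(1-\delta_1)\max\psi(z,\underline{u})$ on $[0,1-\delta_1]$, and use \eqref{eq7-3} with the choices of $\epsilon$ and $\delta_2$ on $[1-\delta_1,1]$. The only differences are cosmetic rearrangements of the inequality chains.
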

\begin{proof}
For $t \in [1 - \delta_1,  1]$,
\[ \begin{aligned}
\mathcal{G}^t [ \underline{v} ]  \,= &\, G^t [\underline{u}]  \, >   \, \psi(z, \underline{u}) - \frac{\epsilon}{2} \,> \, \delta_2  \, \underline{u}^2 +   \, \big( \psi(z, \underline{u}) - \epsilon \big) \\
\,\geq  & \, \big( 1 - T(t) \big) \, \delta_2  \, e^{2 \underline{v}} \, + \,  T(t)\,\, \big( \psi(z, e^{\underline{v}}) - \epsilon \big)
\end{aligned} \]
For $t \in [0, 1 - \delta_1]$,
\[ \begin{aligned}
\mathcal{G}^t [ \underline{v} ]  \,= & \, G^t [ \underline{u} ] \, \geq  \, G^0 [\underline{u}] \, > \, \frac{\epsilon}{2} \, +  \,T(1 - \delta_1) \, \psi(z, \underline{u}) \\ \geq &
 \,  \big( 1 - T(t) \big) \, \delta_2  \, \underline{u}^2 \, + \,  T(t)\,\, \big( \psi(z, \underline{u}) - \epsilon \big) \\ = &\, \big( 1 - T(t) \big) \, \delta_2  \, e^{2 \underline{v}} \, + \,  T(t)\,\, \big( \psi(z, e^{\underline{v}}) - \epsilon \big)
\end{aligned}
\]
\end{proof}

Now we can obtain the existence results in $\mathbb{S}^{n+1}_+$.

\begin{thm} \label{Theorem7-1}
For any $t \in [0, 1]$, the Dirichlet problem \eqref{eq7-1} has a strictly locally convex solution. In particular, \eqref{eq7-2} has a strictly locally convex solution when $K = 1$.
\end{thm}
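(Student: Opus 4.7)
The plan is to extend the degree--theoretic machinery of Theorem \ref{Theorem6-2} to the one--parameter family \eqref{eq7-1}, using the deformation $t \mapsto \mathcal{G}^t$ to bridge from a problem posed in the Euclidean background ($t=0$) to the genuinely spherical one ($t=1$). The starting point is the observation that at $t=0$ we have $T(0)=0$, so \eqref{eq7-1} reduces to
\[ \mathcal{G}^0[v] \,=\, \delta_2 \,e^{2v} \quad \mbox{in}\ \Omega, \qquad v = \ln \varphi \quad \mbox{on}\ \partial\Omega, \]
which is precisely an equation of the form \eqref{eq6-8} in the $K=0$ setting (with $\xi(v)=e^{2v}$ and constant right--hand coefficient $\delta_2$). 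By Theorem \ref{Theorem6-1} this base problem admits a unique strictly locally convex solution $v_0$.

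Next I would establish uniform a priori estimates for any strictly locally convex solution $v$ of \eqref{eq7-1} that lies above the subsolution $\underline{v}$, with constants independent of $t \in [0,1]$. The $C^0$ and $C^1$ bounds follow from Lemma \ref{Lemma5-1} (via the translation $u = e^v$ and comparison with $\underline{v}$ and the strictly locally convex graph spanning $\Gamma$); the boundary and global $C^2$ estimates of Sections 3 and 4 apply uniformly in $t$ because all the quantities $\phi^t, \zeta^t, (\gamma^t)^{ik}$ and their derivatives depend continuously on $t$ on the compact interval $[0,1]$ with values in a set where the fundamental structure conditions \eqref{eq1-5}--\eqref{eq1-11} remain effective. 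The strict subsolution property (Proposition immediately preceding the statement) together with the maximum principle and Hopf lemma argument of Lemma \ref{Lemma6-3}, carried out in the same local coordinates on $\mathbb{S}^n$, shows that every such solution in fact satisfies $v > \underline{v}$ in $\Omega$ and ${\bf n}(v-\underline{v}) > 0$ on $\partial\Omega$ for every $t\in[0,1]$. Combined with the Evans--Krylov and Schauder theory already invoked in Section 3, this yields a uniform $C^{4,\alpha}$ bound on the set of strictly locally convex solutions staying above $\underline{v}$.

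With these ingredients I define the open bounded set
\[ \mathcal{O} = \Bigl\{ v \in C^{4,\alpha}(\overline{\Omega}) : \|v\|_{C^{4,\alpha}} < M,\ v > \underline{v}\ \mbox{in}\ \Omega,\ v\ \mbox{strictly locally convex with}\ (2 K_0)^{-1} < \kappa_i[v] < 2 K_0 \Bigr\}, \]
where $M$ and $K_0$ are the uniform constants above, and consider the map $\mathcal{F}_t : \mathcal{O}\to C^{2,\alpha}(\overline{\Omega})\times C^{4,\alpha}(\partial\Omega)$ sending $v$ to the pair (interior equation residual, boundary residual) for \eqref{eq7-1}. The a priori estimates guarantee that no solution of $\mathcal{F}_t=0$ lies on $\partial\mathcal{O}$, so the Leray--Schauder degree $\deg(\mathcal{F}_t,\mathcal{O},0)$ is well defined and, by homotopy invariance, independent of $t$. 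At $t=0$ the only solution in $\mathcal{O}$ is $v_0$, and the linearization there is
\[ L_0 = \mathcal{G}^{0,ij}\nabla'_{ij} + \mathcal{G}^{0,i}\nabla'_i + \bigl(\mathcal{G}^0_v - 2\delta_2 e^{2v_0}\bigr), \]
whose zeroth--order coefficient is negative by Lemma \ref{Lemma6-1} (applied with $\psi(z)\equiv\delta_2$); hence $L_0$ is invertible on functions vanishing on $\partial\Omega$, and $\deg(\mathcal{F}_0,\mathcal{O},0)=\pm 1$.

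Homotopy invariance then gives $\deg(\mathcal{F}_1,\mathcal{O},0)\neq 0$, producing a strictly locally convex solution $v$ of \eqref{eq7-1} at $t=1$, and the translation $u=e^v$ furnishes the desired solution of \eqref{eq7-2}. The main obstacle, to my mind, is Step 2: verifying that the boundary $C^2$ estimates of Section 3 and the global curvature bound of Theorem \ref{Theorem 1} go through with constants uniform in $t\in[0,1]$, which requires checking that the barrier constructions and the Andrews--Gerhardt inequality application are insensitive to the replacement of $(\phi,\zeta)$ by $(\phi^t,\zeta^t)$. Once this is in hand, the degree argument is essentially routine, and the base case is handed to us by Theorem \ref{Theorem6-1}.
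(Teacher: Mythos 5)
Your proposal follows essentially the same route as the paper: solve the $t=0$ problem via Theorem \ref{Theorem6-1} in the Euclidean setting, establish $t$-independent $C^{4,\alpha}$ estimates by rerunning Sections 3--4 with $(\phi^t,\zeta^t)$, use Lemma \ref{Lemma6-3} to keep solutions off the boundary of the admissible open set, and conclude by homotopy invariance of the degree, with invertibility of the linearization at $t=0$ supplied by Lemma \ref{Lemma6-1}. The only cosmetic difference is that the paper phrases the degree argument for $w=v-\underline{v}$ in $C_0^{4,\alpha}(\overline{\Omega})$ (so the boundary condition is built into the function space, as required for Y.~Y.~Li's degree) rather than for $v$ itself with a boundary residual, but this does not change the substance.
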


\begin{proof}
The $C^{2, \alpha}$ estimates for strictly locally convex solutions $v$ of \eqref{eq7-1} with $v \geq \underline{v}$ is equivalent to the $C^{2, \alpha}$ estimates for strictly locally convex solutions to the Dirichlet problem
\[
\left\{ \begin{aligned}
G^t [\, u\, ] \,  =  & \, \, \big( 1 - T(t) \big) \,\delta_2 \, u^2 \, + \, T(t) \,\big( \psi(z, u) - \epsilon \big) \quad\quad & \mbox{in} \quad \Omega \\ u \,  = & \, \,\varphi \quad \quad & \mbox{on} \quad \partial\Omega \end{aligned} \right.
\]
This can be established by changing $\phi$ and $\zeta$ into $\phi^t$ and $\zeta^t$ in the previous proof.  Then $C^{4, \alpha}$ estimates follows by classical Schauder theory. Thus we have the $t$-independent uniform estimates,
\begin{equation} \label{eq7-4}
\Vert v \Vert_{C^{4,\alpha}(\overline{\Omega})} < C_4 \quad \quad \mbox{and} \quad \quad
C_2^{-1}\, I <   \,\{ v_{ij} \,+\,v_i \, v_j\, + \,\delta_{ij} \} \,  < C_2\, I    \quad \mbox{in} \quad \overline{\Omega}
\end{equation}
Consider the subspace of $C^{4,\alpha}( \overline{\Omega} )$ given by
\[ C_0^{ 4, \alpha} (\overline{\Omega}) := \{ w \in C^{ 4, \alpha}( \overline{\Omega} ) \,| \,w = 0 \,\, \mbox{on} \,\, \partial\Omega \} \]
and the bounded open subset
\[ \mathcal{O} := \left\{ w \in C_0^{4, \alpha} (\overline{\Omega}) \,\left\vert\,\begin{footnotesize}\begin{aligned} & w > 0 \,\,\mbox{in}\,\,\Omega, \quad\quad \nabla'_{\bf n}\, w > 0 \,\,\mbox{on}\,\, \partial\Omega,\\ & C_2^{-1} \,I <  \, \{ (\underline{v} + w)_{ij} \,+\,(\underline{v} + w)_i \, (\underline{v} + w)_j\, + \,\delta_{ij} \} \,  < C_2 \, I   \,\, \mbox{in} \,\, \overline{\Omega} \\ & \Vert w {\Vert}_{C^{4,\alpha}(\overline{\Omega})} < C_4 + \Vert\underline{v}\Vert_{C^{4,\alpha}(\overline{\Omega})} \end{aligned}\end{footnotesize} \right.\right\} \]
Construct a map
$\mathcal{M}_t (w):  \,\mathcal{O} \times [ 0, 1 ] \rightarrow C^{2,\alpha}(\overline{\Omega})$,
\[ \mathcal{M}_t ( w ) = \mathcal{G}^t [ \underline{v} + w ] \, - \big( 1 - T(t) \big) \,\delta_2 \, e^{2 (\underline{v} + w )} - \, T(t) \, \big( \psi(z, \,e^{\underline{v} + w}) - \epsilon \big)\]
At $t = 0$, by Theorem \ref{Theorem6-1} for the case $K = 0$, there is a unique solution $v^0$ to
\eqref{eq7-1}.
By Lemma \ref{Lemma6-2} and Lemma \ref{Lemma6-3} we have $w^0 := v^0 - \underline{v}> 0$ in $\Omega$ and $\nabla'_{\bf n}\, w^0 > 0$ on $\partial\Omega$.  Moreover, $w^0$ satisfies \eqref{eq7-4} and thus $w^0 \in \mathcal{O}$.
Also, Lemma \ref{Lemma6-3} and  \eqref{eq7-4} implies that $\mathcal{M}_t( w ) = 0$ has no solution on $\partial\mathcal{O}$ for any $t \in [0, 1]$.
Besides, $\mathcal{M}_t$ is uniformly elliptic on $\mathcal{O}$ independent of $t$. Therefore,
$\deg (\mathcal{M}_t, \mathcal{O}, 0)$, the degree of $\mathcal{M}_t$ on $\mathcal{O}$ at $0$,
is well defined and independent of $t$.
Hence it suffices to compute $\deg (\mathcal{M}_0, \mathcal{O}, 0)$.

Note that $\mathcal{M}_0 ( w ) = 0$ has a unique solution $w^0 \in \mathcal{O}$. The Fr\'echet derivative of $\mathcal{M}_0$ with respect to $w$ at $w^0$ is a linear elliptic operator from $C^{4, \alpha}_0 (\overline{\Omega})$ to $C^{2, \alpha}(\overline{\Omega})$,
\begin{equation}
\mathcal{M}_{0, w} |_{w^0} ( h )  =  \,  (\mathcal{G}^0)^{ij}[ v^0 ] \nabla'_{ij} h  + (\mathcal{G}^0)^i [ v^0 ]  \nabla'_i h   + \big( (\mathcal{G}^0)_v [v^0]  - \,2 \,\delta_2 \, e^{2 v^0} \big) h
\end{equation}
By Lemma \ref{Lemma6-1}
\[ (\mathcal{G}^0)_v [v^0]  - \,2 \,\delta_2 \, e^{2 v^0}\, < 0 \quad\mbox{in} \quad \Omega \]
Thus $\mathcal{M}_{0,w} |_{w^0}$ is invertible. Applying the degree theory in \cite{Li89},
\[ \deg (\mathcal{M}_0, \mathcal{O}, 0) = \deg( \mathcal{M}_{0, w} |_{w^0}, B_1, 0) = \pm 1 \neq 0 \]
where $B_1$ is the unit ball in $C_0^{4,\alpha}(\overline{\Omega})$. Thus
\[ \deg(\mathcal{M}_t, \mathcal{O}, 0) \neq 0 \quad\mbox{for}\,\,\mbox{all}\,\,t \in [0, 1]\]
and this theorem is proved.
\end{proof}

\vspace{3mm}


\begin{thebibliography}{9}



\bibitem{And}
B. Andrews,{ \em Contraction of convex hypersurfaces in Euclidean space}, Calc. Var. PDE{ \bf 2} (1994), 151--171.


\bibitem{BLO}
J. L. M. Barbosa, J. H. S. Lira and V. I. Oliker, { \em A priori estimates for starshaped compact hypersurfaces with prescribed $m$th curvature function in space forms}, Nonlinear Problems of Mathematical Physics and Related Topics I, (2002), 35--52.


\bibitem{CNSI}
L. A. Caffarelli, L. Nirenberg and J. Spruck,{ \em The Dirichlet problem for  nonlinear second-order elliptic equations I. Monge-Amp\`ere equations}, Comm. Pure Applied Math., {\bf 37} (1984), 369--402.


\bibitem{CNSIII}
L. A. Caffarelli, L. Nirenberg and J. Spruck,{ \em The Dirichlet problem for  nonlinear second-order elliptic equations, III: Functions of the eigenvalues  of the Hessian}, Acta Math., {\bf 155}:3-4 (1985), 261--301.


\bibitem{CNSV}
L. A. Caffarelli, L. Nirenberg and J. Spruck,{ \em Nonlinear second-order elliptic equations V. The Dirichlet problem for Weingarten hypersurfaces}, Comm. Pure Appl. Math., {\bf 41} (1988), 41--70.



\bibitem{CLW18}
D. G. Chen, H. Z. Li and Z. Z. Wang,{ \em Starshaped compact hypersurfaces with prescribed Weingarten curvature in warped product manifolds}, Calc. Var. Parital. Dif., {\bf 57} (2018): 42.


\bibitem{Cruz}
F. F. Cruz,{ \em Radial graphs of constant curvature and prescribed boundary}, Calc. Var. Partial. Dif., {\bf 56} (2017): 83.


\bibitem{Evans}
L. C. Evans,{ \em Classical solutions of fully nonlinear, convex, second order  elliptic equations}, Comm. Pure Appl. Math., {\bf 35} (1982), 333--363.



\bibitem{Ger}
C. Gerhardt,{ \em Closed Weingarten hypersurfaces in Riemannian manifolds}, J. Differential Geom.{ \bf 43} (1996), 612--641.


\bibitem{GNN}
B. Gidas, W. M. Ni and L. Nirenberg, { \em Symmetry and related properties via the maximum principle}, Comm. Math. Phys., {\bf 68} (1979), 209--243.


\bibitem{Guan95}
B. Guan,{ \em On the existence and regularity of hypersurfaces of prescribed Gauss curvature with boundary}, Indiana U. Math. J.,  { \bf 44}  (44) (1995), 221--241.


\bibitem{Guan98}
B. Guan,{ \em The Dirchlet problem for Monge-Amp\`ere equations in non-convex domains and spacelike hypersurfaces of constant Gauss curvature}, Trans. Amer. Math. Soc.,  { \bf 350}  (1998), 4955--4971.


\bibitem{GS93}
B. Guan and J. Spruck,{ \em Boundary-value problems on $\mathbb{S}^n$ for surfaces of constant Gauss curvature}, Ann. of Math., { \bf 138} (1993), 601--624.





\bibitem{GS02}
B. Guan and J. Spruck,{ \em The existence of hypersurfaces of constant Gauss curvature with prescribed boundary}, J. Differential Geom., { \bf 62} (2002), 259--287.


\bibitem{GS04}
B. Guan and J. Spruck,{ \em Locally convex hypersurfaces of constant curvature with boundary}, Comm. Pure Appl. Math., { \bf 57} (2004), 1311--1331.


\bibitem{GL13}
P. F. Guan and J. F. Li,{ \em A mean curvature type flow in space forms}, Int. Math. Res. Notices, 2015(13).

\bibitem{GRW15}
P. F. Guan, C. Y. Ren and Z. Z. Wang,{ \em Global $C^2$-estimates for convex solutions of curvature equations}, Comm. Pure Appl. Math., {\bf 68}(8) (2015), 1287--1325.

\bibitem{JL05}
Q. N. Jin and Y. Y. Li, { \em Starshapded compact hypersurfaces with prescribed $k$th mean curvature in hyperbolic space}, Discrete and Continuous Dynamical Systems {\bf 15} (2005), 367--377.


\bibitem{Krylov}
N. V. Krylov,{ \em Boundedly nonhomogeneous elliptic and parabolic equations in a domain}, Izvestiya Rossiiskoi Akademii Nauk, Seriya Matematicheskaya {\bf 47} (1983), 75--108.

\bibitem{Li89}
Y. Y. Li,{ \em Degree theory for second order nonlinear elliptic operators and its applications}, Commun. Part. Diff. Eq., {\bf 14}(11) (1989), 1541--1578.


\bibitem{LO02}
Y. Y. Li and V. I. Oliker,{ \em Starshaped compact hypersurfaces with prescribed $m$-th mean curvature in elliptic space}, J. Partial Differential Equations, {\bf 15} (2002), 68--80.


\bibitem{Oliker}
V. I. Oliker, { \em Hypersurfaces in $\mathbb{R}^{n + 1}$  with prescribed Gaussian curvature and related equations of Monge-Amp\`ere type}, Comm. P.D.E. {\bf 9} (1984), 807--838.


\bibitem{Ro93}
H. Rosenberg,{ \em Hypersurfaces of constant curvature in space forms}, Bull. Sci. Math., {\bf 117} (1993), 211--239.


\bibitem{Serrin}
J. Serrin,{ \em A symmetry problem in potential theory},  Arch. Rat. Mech. Anal., {\bf 43} (1971), 304--318.


\bibitem{SX15}
J. Spruck and L. Xiao,{ \em A note on starshaped compact hypersurfaces with prescribed scalar curvature in space forms},  Rev. Mat. Iberoam., { \bf 33} (2017), No. 2, 547--554.



\bibitem{Su16}
C. Y. Su,{ \em Starshaped locally convex hypersurfaces with prescribed curvature and boundary}, J. Geom. Anal., { \bf 26}(3) (2016), 1730--1753.


\bibitem{Sui1}
Z. Sui,{ \em Strictly locally convex hypersurfaces with prescribed Gauss curvature and boundary in space forms}, arXiv:1805.08107.



\bibitem{Sui2}
Z. Sui,{ \em Convex hypersurfaces of prescribed curvature and boundary in hyperbolic space}, arXiv:1812.09554.


\bibitem{Tru}
N. S. Trudinger,{ \em On the Dirichlet problem for Hessian equations}, Acta Math., {\bf 175} (1995), 151--164.


\bibitem{TW02}
N. S. Trudinger and X. J. Wang,{ \em On locally convex hypersurfaces with boundary}, J. Reine Angew. Math., {\bf 551} (2002), 11--32.


\end{thebibliography}
\end{document}